\setlist[enumerate]{label=\rm{(\roman*)},leftmargin=\parindent,itemindent=\parindent,labelsep=5pt}
\newlist{TFAE}{enumerate}{1}
\setlist[TFAE]{label=\rm{(\alph*)},labelindent=2\parindent}
\title[A note on FM partners of abelian varieties over positive characteristic fields]{A note on Fourier-Mukai partners of abelian varieties over positive characteristic fields}
\author{Zhiyuan Li}
\address{Shanghai Center for Mathematical Science \\2005 Songhu Road 200438, Shanghai}
\email{zhiyuan\_li@fudan.edu.cn}
\author{Haitao Zou}
\address{Shanghai Center for Mathematical Science \\2005 Songhu Road 200438, Shanghai}
\email{htzou17@fudan.edu.cn}
\subjclass[2010]{Primary 14F05; Secondary 14K05}
\keywords{Fourier-Mukai partner, abelian variety, derived Torelli Theorem, Kummer variety}
\newcommand\cA{\mathcal{A}}
\newcommand\cC{\mathcal{C}}
\newcommand\cE{\mathcal{E}}
\newcommand\cF{\mathcal{F}}
\newcommand\cK{\mathcal{K}}
\newcommand\cL{\mathcal{L}}
\newcommand\cN{\mathcal{N}}
\newcommand\cO{\mathcal{O}}
\newcommand\cP{\mathcal{P}}
\newcommand\cQ{\mathcal{Q}}
\newcommand\cT{\mathcal{T}}
\newcommand\cX{\mathcal{X}}
\newcommand\cY{\mathcal{Y}}
\newcommand\cZ{\mathcal{Z}}
\newcommand\rH{\mathrm{H}}
\newcommand{\loccit}{{\it loc.cit.~}}
\newcommand{\cf}{{\it cf.~}}
\DeclareMathOperator{\D}{D}
\DeclareMathOperator{\Frac}{Frac}
\DeclareMathOperator{\spec}{Spec}
\DeclareMathOperator{\dR}{dR}
\DeclareMathOperator{\id}{id}
\DeclareMathOperator{\Char}{char}
\DeclareMathOperator{\Hom}{Hom}
\DeclareMathOperator{\coh}{Coh}
\DeclareMathOperator{\Qcoh}{QCoh}
\DeclareMathOperator{\km}{\rm Km}
\DeclareMathOperator{\rK}{\rm K}
\DeclareMathOperator{\NS}{NS}
\DeclareMathOperator{\Pic}{Pic}
\DeclareMathOperator{\Z}{\mathbb{Z}}
\newcommand{\ZZ}{\mathbb{Z}}
\DeclareMathOperator{\et}{\text{\'et}}
\DeclareMathOperator{\im}{Im}
\DeclareMathOperator{\crys}{crys}
\DeclareMathOperator{\tr}{tr}
\DeclareMathOperator{\hilb}{Hilb}
\DeclareMathOperator{\Aut}{Aut}
\DeclareMathOperator{\Cong}{\cong}
\newcommand{\bK}{\mathbb{K}}
\numberwithin{equation}{subsection}
\newcounter{ini}
\newtheorem{thm}{Theorem}[section]
\newtheorem{lemma}[thm]{Lemma}
\newtheorem{prop}[thm]{Proposition}
\newtheorem{cor}[thm]{Corollary}
\newtheorem{question}[thm]{Question}
\theoremstyle{remark}
\newtheorem{rmk}[thm]{Remark}
\theoremstyle{definition}
\newtheorem{defn}[thm]{Definition}
\theoremstyle{remark}
\begin{document}
\begin{abstract}
Over complex numbers, the Fourier-Mukai partners of abelian varieties are well-understood. A celebrated result is Orlov's derived Torelli theorem. In this note, we study the FM-partners of abelian varieties in positive characteristic. We notice that, in odd characteristics, two abelian varieties of odd dimension are derived equivalent if their associated Kummer stacks are derived equivalent, which is Krug and Sosna's result over complex numbers. For abelian surfaces in odd characteristic, we show that two abelian surfaces are derived equivalent if and only if their associated Kummer surfaces are isomorphic. This extends the result \cite{HosonoLianEtAl2003} to odd characteristic fields, which solved a classical problem originally from Shioda.  Furthermore, we establish the derived Torelli theorem for supersingular abelian varieties and apply it to characterize the quasi-liftable birational models of supersingular generalized Kummer varieties. 
\end{abstract}
\maketitle

\section{Introduction}

Let $A$ be an abelian variety over an algebraically closed field $k$. It is desirable to have a description of the derived category of $A$ via its associated Kummer stack $\mathrm{K}(A)\coloneqq [A/\iota]$, where $\iota$ acts on $A$ by the involution $-1 \colon A \to A$. When $A$ is an abelian surface, the singular Kummer variety $A/\iota$ admits a crepant resolution, denoted by $\km(A)$.
A classical problem raised by Shioda is 
\begin{question}[\cite{Shioda1977}]\label{Q1}
{For two abelian surfaces $A_1$ and $A_2$, if $\km(A_1) \cong \km(A_2)$, then can we conclude that $A_1 \cong A_2$?}
\end{question}
Over complex numbers, Question \ref{Q1} was solved in \cite{HosonoLianEtAl2003,orlov97}: two abelian surfaces have isomorphic Kummer surfaces if and only if they are derived equivalent.  Their proof relies on the use of Hodge theory and global Torelli theorem for abelian surfaces, which are missing in positive characteristic fields.  More generally, Stellari has investigated this problem for abelian varieties of arbitrary dimension in \cite{Stellari2007}.

In this paper, we are interested in above questions over positive characteristic fields. In particular, we would like to extend the result of \cite{HosonoLianEtAl2003} and \cite{Stellari2007} to  fields with odd characteristic. The following result gives an answer of  Shioda's question over positive characteristic fields. 
\begin{thm}\label{mainthm1}
Assume $\mathrm{char}(k)\neq 2$. Let $A_1$ and $A_2$ be abelian varieties of dimension $n$ over $k$. Then the following holds:
\begin{enumerate}
    \item If $A_1$ is derived equivalent to $A_2$, then Kummer stack $\rK(A_1)$ is derived equivalent to $\rK(A_2)$.
    \item If $n$ is odd, then the converse of (i) holds.
    \item If $n=2$, then $A_1$ and $A_2$ are derived equivalent if and only if $\km(A_1)\cong \km(A_2)$. If $A_1$ is supersingular, then $A_2$ is derived equivalent to $A_1$ if and only if $A_2\cong A_1$. 
\end{enumerate}
\end{thm}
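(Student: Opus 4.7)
Given a Fourier--Mukai equivalence $\Phi_{\cP}\colon \D(A_1)\stackrel{\sim}{\to}\D(A_2)$ with kernel $\cP \in \D(A_1\times A_2)$, my plan is to upgrade $\cP$ to an object equivariant under the diagonal involution $\iota\times \iota$, so that it descends to a kernel on $\rK(A_1)\times \rK(A_2)$ realizing the equivalence of Kummer stacks. The key input is the Orlov--Polishchuk classification of FM kernels between abelian varieties, which is available in arbitrary characteristic and presents $\cP$, up to line bundle twist and translation, as the standard kernel attached to a symplectic isomorphism $A_1\times \widehat{A}_1 \cong A_2\times \widehat{A}_2$. Since $\iota$ is canonical to the group law, any such isomorphism intertwines the involutions, and after choosing the translation at a $2$-torsion point the diagonal $\iota$-action lifts to $\cP$.

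\textbf{Part (ii).} For odd $n$ I would follow the strategy of Krug and Sosna. The equivariant category $\D(\rK(A))=\D^{\iota}(A)$ contains $\D(A)$ via the induction functor $\Inf\colon \cF\mapsto \cF\oplus \iota^*\cF$, and the orthogonal complement is a ``twisted'' copy of $\D(A)$. The decisive observation in odd dimension is that the Serre functor on $\D^{\iota}(A)$, together with the $\iota$-action on the canonical bundle, separates the induced summand from the twisted one in an intrinsic way. A derived equivalence of Kummer stacks must therefore restrict to the induced parts, producing the desired equivalence $\D(A_1)\simeq\D(A_2)$. All that is needed from positive characteristic is $\mathrm{char}\,k\neq 2$, ensuring that $[A/\iota]$ is a smooth Deligne--Mumford stack and that $\iota$-equivariant sheaves split into isotypic components.

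\textbf{Part (iii), surface case.} For the forward direction, combining (i) with the Bridgeland--King--Reid equivalence $\D(\km(A))\simeq \D(\rK(A))$ -- valid in odd characteristic since $\km(A)\to A/\iota$ is a crepant resolution of a smooth DM stack -- gives a derived equivalence $\km(A_1)\sim_{D}\km(A_2)$. The transcendental sublattice of the crystalline cohomology of $\km(A)$ is canonically $T_{\crys}(A)(2)$, and Orlov's derived Torelli for abelian surfaces (coupled with Mukai's criterion for K3 surfaces) implies that derived equivalent abelian surfaces have isometric transcendental crystalline lattices; hence so do their Kummer K3 surfaces. A positive-characteristic Torelli theorem for K3 surfaces (Lieblich--Maulik, Madapusi-Pera; Ogus in the supersingular case) then upgrades the derived equivalence to an isomorphism $\km(A_1)\cong \km(A_2)$. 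The converse runs the same chain of equivalences backwards: an isomorphism of Kummer surfaces gives an isometry $T_{\crys}(A_1)(2)\simeq T_{\crys}(A_2)(2)$, hence $T_{\crys}(A_1)\simeq T_{\crys}(A_2)$, which by a positive-characteristic analogue of the Hosono--Lian--Oguiso--Yau/Stellari criterion produces $A_2$ as an FM partner of $A_1$.

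\textbf{Supersingular case and main obstacle.} For $A_1$ supersingular I would invoke the derived Torelli theorem for supersingular abelian varieties established earlier in the paper: it yields a crystalline Mukai isometry $\widetilde{H}_{\crys}(A_1)\simeq \widetilde{H}_{\crys}(A_2)$. In dimension $2$ the maximal Picard rank $\rho=6$ constrains the possible Mukai vectors, and the discrete structure of the moduli of supersingular abelian surfaces leaves no room for non-trivial FM partners, forcing $A_2\cong A_1$. The main obstacle throughout part (iii) is precisely this translation of Hodge-theoretic inputs (global Torelli, Mukai's criterion, transcendental lattice calculations) into the crystalline setting, while keeping track of the Kummer lattice; once that translation is in place, the remaining steps (descent of kernels, Krug--Sosna reconstruction, moduli interpretation of FM partners) are essentially formal.
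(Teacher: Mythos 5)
Your parts (i) and (ii) follow essentially the paper's route: for (i) one uses Orlov's classification to show the kernel becomes $\iota\times\iota$-invariant after twisting by a square root of a degree-zero line bundle, and the vanishing of $\rH^2(\ZZ/2,k^*)$ upgrades invariance to a linearization; for (ii) the torsion canonical bundle of $[A/\iota]$ in odd dimension, together with the fact that Serre functors commute with equivalences, makes any equivalence of Kummer stacks $\widehat{G}$-equivariant, and Elagin's duality $\D^b([A/\iota])^{\widehat{G}}\simeq\D^b(A)$ lifts it. These are fine, modulo the imprecision that $\D^b(A)$ is recovered as the $\widehat{G}$-equivariant category, not as a ``summand'' of $\D^b([A/\iota])$.

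The genuine gap is in part (iii), in the direction $\km(A_1)\cong\km(A_2)\Rightarrow \D^b(A_1)\simeq\D^b(A_2)$ for abelian surfaces of finite height. You appeal to ``a positive-characteristic analogue of the Hosono--Lian--Oguiso--Yau/Stellari criterion'' converting an isometry of crystalline transcendental lattices into a Fourier--Mukai partnership, and likewise to a ``positive-characteristic Torelli theorem for K3 surfaces'' in the forward direction. Neither exists for finite-height surfaces: Ogus's crystalline Torelli theorem applies only to supersingular K3 surfaces, and there is no crystalline surjectivity-of-periods statement that manufactures a derived equivalence from a lattice isometry. This is exactly the obstacle you flag at the end, but flagging it is not resolving it; the whole content of the finite-height case lies there. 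The paper's mechanism, absent from your proposal, is to lift to characteristic zero and specialize back: one chooses a N\'eron--Severi-preserving lift $\cX$ of the K3 surface $X=\km(A_1)\cong\km(A_2)$ over a finite extension $V$ of $W(k)$ (Lieblich--Maulik), proves that \emph{both} Kummer structures lift, i.e.\ produces abelian schemes $\cA_1,\cA_2$ over $V$ with $\km(\cA_i)\cong\cX$ (by extending the sixteen exceptional classes and the $2$-divisibility of their sum across the lift, then taking the double cover and contracting), applies the Hosono--Lian--Oguiso--Yau theorem to the geometric generic fibers, and finally specializes the resulting derived equivalence to the closed fiber via Orlov's symplectic-isomorphism criterion and the Matsusaka--Mumford extension theorem. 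For the forward direction the needed input is not a Torelli theorem but the Lieblich--Olsson result that Kummer surfaces in odd characteristic have no non-trivial Fourier--Mukai partners. Finally, in the supersingular case your heuristic that ``the discrete structure of the moduli of supersingular abelian surfaces leaves no room for non-trivial FM partners'' is not an argument (that moduli space is positive-dimensional); the actual step is to pass from the Mukai-crystal isometry to an isomorphism of K3 crystals $\rH^2_{\crys}(A_1/W)\cong\rH^2_{\crys}(A_2/W)$ by comparing Ogus's characteristic subspaces, and then apply the crystalline Torelli theorem.
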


The statements (i) and (ii) will be proved with techniques around equivariant derived categories, which is already known in  \cite{KrugSosna2015,ploog07} with $k=\mathbb{C}$. In the same way, we will generalize them to the case that $\Char(k)>2$ . 

The proof of statement (iii) will be divided into two cases.
\begin{itemize}
    \item For the finite height case, we can prove a lifting theorem for Kummer structures (see \S \ref{subsec:liftingKummerstructure}). Then the specialization argument of derived equivalences will imply this statement.
    \item For the supersingular case, it can be concluded by supersingular Torelli theorem for abelian varieties (see \S \ref{subsec:supersingularderivedtorelli}).
\end{itemize}
Notice that Theorem \ref{mainthm1} shows that supersingular abelian surfaces do not have any non-trivial Fourier-Mukai partners.  We expect that there is a similar characterization for higher dimension supersingular abelian varieties. 

 As an application, we  can characterize the quasi-liftably birational class (\cf\cite[Definition 3.3]{FL18}) of irreducible symplectic varieties which come from the moduli space of sheaves on supersingular abelian surfaces. Consider the moduli space of Gieseker-Maruyama $H$-stable sheaves on $A$ with Mukai vector $v$ such that $p\nmid v^2$, denoted by $M_H(A,v)$. Recall that the fiber $K_v(A)$  of the Albanese morphism 
 \[
 (\det, c_{2})\colon M_{H}(A,v)\to \Pic^{0}(A)\times A,
 \]
 is an irreducible symplectic variety of dimension $v^2-2$ in the sense of \loccit. A consequence in \loccit asserts that the generalized Kummer type variety $K_v(A)$ is quasi-liftably birational to some generalized Kummer variety $K_n(A')$, where $A'$ is a Fourier-Mukai partner of $A$. Here we verify that $A \cong A'$. 
\begin{thm}\label{thm:supersingulargeneralizedKummer}Let $n = \frac{v^2}{2}$. Suppose that $p \nmid n$ and $p \nmid n+1$. Let $A$ be a supersingular abelian surface and let $K_v(A)$ be the generalized Kummer type variety. Then 
\begin{enumerate}
    \item $K_v(A)$ is quasi-liftably birational equivalent to $K_n(A)$. 
    \item if $K_v(A)$ is quasi-liftably birational to $K_v(A')$ for some abelian surface $A'$ and $v'$, then $A\cong A'$. 
\end{enumerate}
\end{thm}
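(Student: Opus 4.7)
The plan is to deduce both parts from Theorem \ref{mainthm1}(iii) together with the structural result of \cite{FL18}. Note that the hypotheses $p\nmid n$ and $p\nmid n+1$ force $p$ to be odd (one of $n$, $n+1$ is even), so Theorem \ref{mainthm1}(iii) is applicable. For part (i), \cite{FL18} provides a Fourier-Mukai partner $A'$ of $A$ such that $K_v(A)$ is quasi-liftably birational to $K_n(A')$. The supersingularity of $A$ combined with Theorem \ref{mainthm1}(iii) forces $A'\cong A$, so $K_v(A)$ is quasi-liftably birational to $K_n(A)$, which is (i).

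For part (ii), the dimension identity $\dim K_v(A)=v^2-2$ together with the assumed quasi-liftable birationality gives $v^2=v'^2$; writing $n=v^2/2=v'^2/2$ and applying \cite{FL18} to $A'$, we obtain an FM-partner $A''$ of $A'$ with $K_{v'}(A')$ quasi-liftably birational to $K_n(A'')$. Concatenating this with part (i), and using transitivity of the quasi-liftable birational relation, one reduces part (ii) to the following assertion: if $A$ is supersingular and $K_n(A)$ is quasi-liftably birational to $K_n(B)$ for an abelian surface $B$, then $B$ is a Fourier-Mukai partner of $A$. Granting this, Theorem \ref{mainthm1}(iii) applied first to $B=A''$ yields $A\cong A''$, so $A$ itself is an FM-partner of $A'$, and a second application of the theorem gives $A\cong A'$.

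To establish the reduction, I would lift $A$ and $B$ to a DVR $R$ of mixed characteristic $(0,p)$ afforded by the quasi-liftable structure, obtaining abelian schemes $\widetilde A,\widetilde B$ whose generalized Kummer varieties $K_n(\widetilde A)$ and $K_n(\widetilde B)$ are birational on the generic fibre. In characteristic zero, a birational equivalence of generalized Kummer hyperk\"ahler varieties descends to a derived equivalence of the underlying abelian surfaces; this is the generalized-Kummer-type analogue of the classical $K3$ fact and can be assembled from the Hodge/Mukai-lattice arguments of \cite{HosonoLianEtAl2003,Stellari2007,orlov97}. Specializing the resulting Fourier-Mukai kernel from $\spec R$ to the closed point --- by the same mechanism the paper employs for the finite-height case of Theorem \ref{mainthm1}(iii) --- then promotes the derived equivalence to characteristic $p$, and a final invocation of Theorem \ref{mainthm1}(iii) gives $B\cong A$.

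The main obstacle I anticipate is precisely the char-$0$ input at the heart of the reduction: upgrading birationality of generalized Kummer-type hyperk\"ahlers to derived equivalence of the underlying abelian surfaces. The surface case ($n=1$) is classical, but for general $n$ one needs the Mukai-type Hodge isometry on the total cohomology of $K_n$ induced by a birational map, together with the moduli-of-sheaves/wall-crossing picture on abelian surfaces, to recover the abelian side. The subsequent specialization of Fourier-Mukai kernels along $R$ should, by contrast, be routine within the framework already developed for Theorem \ref{mainthm1}.
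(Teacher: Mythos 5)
Your part (i) is exactly the paper's argument: cite \cite{FL18} for a quasi-liftably birational model $K_n(A')$ with $A'$ an FM-partner of $A$, and use the absence of non-trivial FM-partners of a supersingular abelian surface (Corollary \ref{cor3}) to force $A'\cong A$. No issues there.

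For part (ii) you take a genuinely different route from the paper, and it has a gap that I do not think you can close. The quasi-liftably birational hypothesis hands you liftings of the \emph{generalized Kummer varieties} $K_v(A)$ and $K_{v'}(A')$ as families of irreducible symplectic varieties with birational geometric generic fibers; it does not hand you abelian schemes $\widetilde A,\widetilde B$ over a common DVR whose associated generalized Kummers realize those liftings. For $n\ge 2$ this is not a technicality: the locus of honest generalized Kummer varieties has positive codimension in the deformation space of Kummer-type hyperk\"ahlers ($b_2=7$, while the $K_n(A)$'s form a family of smaller dimension), so the generic fiber of a given lifting of $K_n(A)$ will typically \emph{not} be $K_n$ of any abelian surface. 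Consequently there is no abelian surface in characteristic zero to which the Hodge/Mukai-lattice arguments of \cite{HosonoLianEtAl2003,Stellari2007} could be applied, and no Fourier--Mukai kernel on a product of abelian schemes to specialize via the Matsusaka--Mumford mechanism of Lemma \ref{lemma:specializationofDE}. Repairing this would require a higher-dimensional analogue of Lemma \ref{lemma:liftingkummer} (a ``lifting of generalized Kummer structures'' statement), which is not available. A secondary, smaller issue is that even granting compatible liftings, your central char-$0$ claim --- birationality of generalized Kummers implies derived equivalence of the underlying abelian surfaces --- is left unassembled; it is true (pass to the Hodge isometry on $H^2$ with the Beauville--Bogomolov form, restrict to transcendental lattices, and invoke the derived Torelli statements for abelian surfaces), but you flag it without carrying it out.

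The paper's proof is designed precisely to avoid extracting abelian surfaces in characteristic zero. From the birational map it takes only the $G_F$-equivariant isometry of $\rH^2_{\et}$ with Beauville--Bogomolov forms (Huybrechts), transfers it to an isometry of $F$-crystals $\rH^2_{\crys}(K_n(A)/W)\cong \rH^2_{\crys}(K_n(A')/W)$ via the integral crystalline--\'etale comparison (Lemma \ref{lemma:isocrystalH2}), and then works entirely in characteristic $p$: the orthogonal decomposition $\rH^2_{\crys}(K_n(A)/W)\cong \rH^2_{\crys}(A/W)\oplus W(-1)$ of Lemma \ref{lemma:BBforms} identifies the characteristic subspaces of $A$ and $K_n(A)$ (Lemma \ref{lemma:abelian-Kummer}), Ogus's classification of supersingular K3 crystals (Theorem \ref{thm:characteristicspace}) upgrades the chain of characteristic-subspace isomorphisms to an isomorphism $\rH^2_{\crys}(A/W)\cong \rH^2_{\crys}(A'/W)$ of K3 crystals, and the crystalline Torelli theorem gives $A\cong A'$. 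You should either adopt that crystalline strategy or supply the missing lifting-of-Kummer-structures input.
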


\noindent\textbf{Acknowledgement:} We are grateful to Lie Fu for helpful discussions and comments. We also want to thank the referee for several useful comments. The authors  are  supported by NKRD Program of China (No. 2020YFA0713200), NSFC General Program (No.~11771086) and Shanghai Pilot Program for Basic Research (No. 21TQ00).

\section{Equivariant derived category of schemes}
\label{equi-dc}
\subsection{Equivariant quasi-coherent sheaves} \label{sec:eqcoh}
Let $G$ be a constant finite group scheme over $k$ and let $X$ be a quasi-compact and quasi-separated scheme over a field $k$ with a $G$-action $$\mu \colon G \times_k X \to X.$$
A {\em $G$-equivariant quasi-coherent sheaf} on $X$ is a pair $(\cF,\lambda)$ such that $\cF$ is a quasi-coherent sheaf on $X$ and $\lambda$ is a family $ \left\{\lambda_g \colon \cF \to g^*\cF \right\}_{g \in G}$ of isomorphisms satisfying the following cocycle condition:
    \begin{equation}
    \label{eq:cocyclecondition}
   \lambda_{fg} = \lambda_f \circ \lambda_g \colon \cF \to g^* \cF \to (fg)^*\cF.
    \end{equation}
We call $\lambda$ a \emph{$G$-linearization} of $\cF$. For instance, $\cO_{X}^{\rm can}= (\cO_X, \id_{\cO_{X \times G} })$ is a $G$-equivariant quasi-coherent sheaf. In this paper,  we denote by $\Qcoh_G(X)$ the category of $G$-equivariant quasi-coherent $\cO_X$-modules. 
\begin{rmk}
If the cocyle condition \eqref{eq:cocyclecondition} for $\gamma$ is missing, then $\cF$ will be called \emph{$G$-invariant}.
\end{rmk}

Let $[X/G]$ be the quotient stack given by the $G$-action on $X$ and denote by $\Qcoh([X/G])$ the category of quasi-coherent sheaves on $[X/G]$ (\cf\cite[\S 9]{Olsson2016}). There is a well-known stacky description for $G$-equivariant quasi-coherent sheaves  on $X$ as follows. 
\begin{lemma}
\label{lemma:stackydescrip}
 There is a canonical equivalence of categories \[
 \Qcoh([X/G]) \simeq \Qcoh_{G}(X).
 \]
 Moreover, if $X$ is locally noetherian, then we also have $\coh([X/G]) \cong \coh_G(X)$.
\end{lemma}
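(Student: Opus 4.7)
The plan is to identify both categories with descent data for the atlas $\pi\colon X \to [X/G]$. Since $G$ is a constant finite group scheme, the map $\pi$ is a $G$-torsor, and its associated groupoid presentation is
\[
G \times_k X \rightrightarrows X,
\]
with the two structure maps being $\mu$ and the projection $p_X$. By the general theory of quasi-coherent sheaves on Artin stacks (\cf \cite[\S 9]{Olsson2016}), the category $\Qcoh([X/G])$ is equivalent to the category of pairs $(\cF, \lambda)$, where $\cF \in \Qcoh(X)$ and $\lambda\colon p_X^*\cF \xrightarrow{\sim} \mu^*\cF$ is a descent isomorphism satisfying the usual cocycle condition on $G \times_k G \times_k X$, together with a normalization on the unit section.

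Next I would unwind this descent datum using the fact that $G$ is constant, so that $G \times_k X = \bigsqcup_{g \in G} X$. Then $\lambda$ decomposes as a family $\{\lambda_g \colon \cF \to g^*\cF\}_{g \in G}$, and the cocycle condition on $G \times_k G \times_k X = \bigsqcup_{(f,g)} X$, read on the $(f,g)$-component, becomes precisely the cocycle relation \eqref{eq:cocyclecondition}; the normalization reduces to $\lambda_e = \id$. This matches the notion of a $G$-linearization in the statement, and morphisms on both sides are the ones compatible with the descent data, giving the desired equivalence.

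For the coherent version, I would invoke the fact that coherence on a locally noetherian Artin stack can be tested after pullback along a smooth atlas. Since $X$ is such an atlas for $[X/G]$ and is assumed locally noetherian, a sheaf $\cG$ on $[X/G]$ is coherent if and only if $\pi^*\cG$ is coherent on $X$, so the equivalence restricts to $\coh([X/G]) \simeq \coh_G(X)$.

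The main obstacle is really only bookkeeping: keeping track of the direction of $\lambda$ (whether $\cF \to g^*\cF$ or the other way), and aligning the cocycle convention in \eqref{eq:cocyclecondition} with the one coming from the groupoid presentation of $[X/G]$. Because $G$ is discrete, no nontrivial descent is needed — everything reduces to a disjoint-union decomposition of $G\times_k X$ and an index-wise verification.
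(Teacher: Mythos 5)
Your proposal is correct, and it reaches the same destination as the paper by a slightly different road. The paper builds the functor $\Qcoh_G(X)\to\Qcoh([X/G])$ by hand on the functor-of-points description of the stack: for each object $(\mathscr{P}\to X_T,\,\mathscr{P}\to T)$ it pulls $\cF$ back to $\mathscr{P}$ and then descends along the $G$-torsor $\mathscr{P}\to T$ using the equivalence $\Qcoh(T)\simeq\Qcoh_G(\mathscr{P})$ of \cite[Theorem 4.46]{vistolinote}, with the inverse functor given by evaluation on the atlas $(G_X,\mu)\colon X\to[X/G]$. You instead black-box the general equivalence between $\Qcoh([X/G])$ and descent data for the groupoid $G\times_k X\rightrightarrows X$, and then unwind that datum using $G\times_k X=\bigsqcup_{g\in G}X$ to recover the family $\{\lambda_g\colon\cF\to g^*\cF\}$ and the cocycle condition \eqref{eq:cocyclecondition}. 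The mathematical content is the same in both cases --- descent along the $G$-torsor atlas --- but your version front-loads the stack-theoretic input into one citation and reduces the rest to the combinatorial decomposition of $G\times_k X$, whereas the paper's version makes the sheaf on the stack explicit on every test object, which is what it needs anyway to verify quasi-coherence via \cite[Proposition 9.1.15]{Olsson2016}. Your treatment of the coherent case (testing coherence after pullback along the smooth, indeed finite \'etale, atlas $X\to[X/G]$) is exactly the paper's argument. The only caveat, which you already flag, is the bookkeeping of conventions: the middle arrow in \eqref{eq:cocyclecondition} is really $g^*\lambda_f$, and your identification of the $(f,g)$-component of the cocycle condition on $G\times_k G\times_k X$ must match that convention; this is routine and does not affect correctness.
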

\begin{proof}
Assume that $X$ is locally noetherian.
Consider the functor
\[
\begin{aligned}
    \Phi \colon \Qcoh_G(X) &\to \Qcoh([X/G])\\
    (\cF,\lambda) &\mapsto \widetilde{\cF}
\end{aligned}
\]
where $\widetilde{\cF}$ is the sheaf on $[X/G]$ defined as follows.
For any object
\[
\begin{tikzcd}
\mathscr{P} \ar[r,"\pi"] \ar[d] & X_T \\
T
\end{tikzcd}
\]
lying on a $k$-scheme $T$, the pull-back $\pi^* \cF_T$ is a $G$-equivariant quasi-coherent sheaf on $\mathscr{P}$ as $\pi$ is $G$-equivariant. 
The descent theory along $G$-torsor for the stack $\Qcoh_k$ of quasi-coherent sheaves  establishes a canonical equivalence:
\begin{equation}\label{eq:descentalongtorsor}
\Qcoh(T) \simeq \Qcoh_G(\mathscr{P}), 
\end{equation}
see \cite[Theorem 4.46]{vistolinote} for example. Take $\widetilde{F}(T, \mathscr{P}, \pi)$ to be one in the isomorphism class in $\Qcoh(T)$ corresponding to $\pi^* \cF_T$.
It remains to show that $\widetilde{F}$ is quasi-coherent (resp.\ coherent).

Consider the smooth covering $(G_X,\mu) \colon X \to [X/G]$ where $G_X$ is the trivial torsor on $X$ defined by $p_X \colon X \times_k G \to X$ and $\mu$ is the group action of $G$ on $X$. Since the $\lambda$ is an isomorphism
\[
p_X^* \cF \xrightarrow{\sim} \mu^* \cF,
\]
we can see $\widetilde{\cF}(X,G_X,\mu) \cong \cF$
by the previous construction, which is quasi-coherent (resp.\ coherent). Thus by \cite[Proposition 9.1.15]{Olsson2016}, we can see $\widetilde{\cF}$ is quasi-coherent (resp.~coherent).

The converse is similar. We just take $\Phi^{-1} \widetilde{\cF}$ to be the quasi-coherent sheaf (resp.\ coherent sheaf) $\widetilde{\cF}(X,G_X,\mu)$. The linearization $\lambda$ is from the definiton of quasi-coherent sheaves (resp.\ coherent sheaves) on $[X/G]$.
\end{proof}
The Lemma \ref{lemma:stackydescrip} implies that the category $\Qcoh_G(X)$ is a Grothendieck category (\cf\cite[\href{https://stacks.math.columbia.edu/tag/0781}{Tag 0781}]{stacks-project}). This promises a nice homological algebraic theory on $G$-equivariant quasi-coherent sheaves. In the following literature, the \emph{$G$-equivariant derived category} of $X$ means the bounded derived category of $\coh_{G}(X)$,  denoted by $\D^b_G(X)$. A useful fact for $G$-equivariant derived category is the derived McKay correspondence established by Bridgeland--King--Reid \cite{BKR}. 
The restriction of the Hilbert--Chow morphism $X^{[n]} \to X^{(n)}$ gives a morphism
\[
\tau\colon X\sslash G \to X/G.
\]
under the natural inclusion $X\sslash G \hookrightarrow X^{[n]}$.

\begin{thm}[Bridgeland--King--Reid]
\label{thm:BKR}Assume that $(|G|,p)=1$.
Suppose the following two conditions hold
\begin{enumerate}[label=\tt{(BKR\arabic*)},leftmargin=3pc]
    \item $\omega_X$ is locally trivial as a $G$-bundle,
    \item $X\sslash G \times_{\tau} X\sslash G$ has dimension $d \leq \dim X+1$.
\end{enumerate}
 Then there is a derived equivalence between $\D^b(X\!\sslash\!G)$ and $\D^b_G(X)$.
\end{thm}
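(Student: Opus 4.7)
The plan is to adapt the Bridgeland--King--Reid argument to characteristic $p$ coprime to $|G|$: construct a Fourier--Mukai-type functor with kernel on the $G$-Hilbert scheme $X\sslash G$, establish fully faithfulness via a spanning-class / intersection-theoretic argument, and then upgrade to an equivalence using Serre functors. The coprimality hypothesis ensures that $G$-averaging is exact, so the homological algebra of $\coh_G(X)$ behaves as it does in characteristic zero. For the kernel, let $Z \subset X\sslash G \times X$ be the restriction of the universal subscheme from $X^{[n]}\times X$, with projections $p\colon Z\to X\sslash G$ and $q\colon Z\to X$. Since $q$ is $G$-equivariant (trivial action on the source), $\cO_Z$ carries a canonical $G$-linearization and
\[
\Phi\colon \D^b(X\sslash G)\longrightarrow \D^b_G(X),\qquad \Phi(\cF):=\rder q_{*}\bigl(\lder p^{*}\cF\bigr)
\]
is a well-defined Fourier--Mukai-type functor; on a point sheaf $\cO_x$ it produces the structure sheaf of the corresponding $G$-cluster in $X$.

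To establish fully faithfulness I would apply the Bondal--Orlov spanning-class criterion to $\{\cO_x\}_{x\in X\sslash G}$. The nontrivial input is the vanishing
\[
\Hom_{\D^b_G(X)}\bigl(\Phi(\cO_x),\Phi(\cO_y)[i]\bigr)=0 \quad \text{for $\tau(x)\neq \tau(y)$ or $i\notin[0,\dim X]$},
\]
together with the expected identification when $x=y$. A base change to the fiber product $X\sslash G \times_{\tau} X\sslash G$ turns this into a local $\mathrm{Tor}$/$\Ext$ computation, and the hypothesis (BKR2) that $\dim(X\sslash G \times_{\tau} X\sslash G)\le \dim X+1$ is precisely the input needed for the new intersection theorem of Peskine--Szpiro--Roberts (valid in all characteristics) to force the fiber-product sheaf to coincide, up to quasi-isomorphism, with the structure sheaf of the diagonal component. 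Exactness of $G$-averaging makes every equivariant $\Ext$ a direct summand of the corresponding non-equivariant one, so nothing is lost in passing to $\D^b_G(X)$.

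The final step upgrades fully faithfulness to an equivalence. The Serre functor on $\D^b_G(X)$ is $-\otimes \omega_X[\dim X]$ with its natural $G$-linearization, and condition (BKR1) -- local triviality of $\omega_X$ as a $G$-bundle -- says exactly that this linearization is trivial \'etale-locally on $X/G$, so $\Phi$ intertwines the Serre functors on source and target. A standard criterion (Bondal--Kapranov) then promotes a fully faithful exact functor between $\Hom$-finite triangulated categories of equal Serre dimension to an equivalence, once indecomposability of the target is checked from connectedness of $X\sslash G$. The main obstacle throughout is the $\Ext$-vanishing step: converting the dimension bound (BKR2) into sharp bounds on local $\mathrm{Tor}$-amplitude requires careful invocation of the intersection theorem and constant tracking of the $G$-equivariant structure on $\rder q_{*}$, and the hypothesis $(|G|,p)=1$ is indispensable here, since without it $G$-averaging fails to be exact and the reduction to the non-equivariant intersection theorem collapses.
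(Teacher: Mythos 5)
Your proposal is a faithful outline of the original Bridgeland--King--Reid argument (Fourier--Mukai kernel supported on the universal $G$-cluster, fully faithfulness reduced via the fiber product $X\sslash G\times_{\tau}X\sslash G$ to the new intersection theorem of Peskine--Szpiro--Roberts, and the Serre-functor upgrade using \texttt{(BKR1)}), and it correctly identifies the two places where $(|G|,p)=1$ is needed. This is essentially the same route the paper takes: its proof consists of citing the original argument together with Dolgachev's notes for the observation that every step, including the intersection theorem, is characteristic-free once $|G|$ is invertible in $k$, so the only difference is that you spell out the details the paper leaves to the references.
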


\begin{proof}
The original proof in \loccit is for $k =\mathbb{C}$. However, this also proceeds for general case that $(|G|,p) =1$ (\cf\cite[Theorem 2.4.5]{dolgachevnote}).
\end{proof}

The following consequence is well-known over complex numbers (\cf\cite[\S 10.2]{BKR} or \cite[\S 3.2]{ploog07}).
\begin{cor}\label{cor:equivariantabelian}
Suppose $\Char(k)\neq 2$. Let $A$ be an abelian surface over $k$. Let $\iota$ be the involution on $A$ and $\langle \iota \rangle$ the finite group scheme over $k$ generated by $\iota$. Then $\D^b([A/\langle\iota\rangle])\simeq \D^b(\km(A))$. 
\end{cor}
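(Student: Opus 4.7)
The plan is to apply the Bridgeland–King–Reid theorem (Theorem \ref{thm:BKR}) directly, with $X = A$ and $G = \langle\iota\rangle$. First, Lemma \ref{lemma:stackydescrip} gives the equivalence
\[
\D^b([A/\langle\iota\rangle]) \simeq \D^b_{\langle\iota\rangle}(A),
\]
so it suffices to identify the right-hand side with $\D^b(\km(A))$ via Theorem \ref{thm:BKR}. Since $|\langle\iota\rangle| = 2$ and $\Char(k)\neq 2$, the coprimality hypothesis $(|G|, p) = 1$ holds.

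Next I would verify the two hypotheses (BKR1) and (BKR2). For (BKR1): because $A$ is an abelian variety, $\omega_A \cong \cO_A$ is globally trivialized by a translation-invariant top form $\omega$. The involution $\iota = -1$ acts on the cotangent space at the origin by $-1$, hence on the top form by $(-1)^{\dim A} = (-1)^2 = +1$. Therefore $\iota^*\omega = \omega$, so $\omega_A$ with its natural linearization is the trivial $G$-equivariant line bundle; in particular it is locally trivial as a $G$-bundle.

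For (BKR2): I would use the known identification $A\sslash\langle\iota\rangle \cong \km(A)$ for the equivariant Hilbert scheme in the surface case (in characteristic $\neq 2$), and that the natural map $\tau\colon \km(A)\to A/\iota$ is the minimal resolution contracting sixteen $(-2)$-curves above the $2$-torsion points. The fibered product $\km(A)\times_\tau \km(A)$ is thus the union of the diagonal (of dimension $2$) with the sixteen products $\mathbb{P}^1\times \mathbb{P}^1$ (each of dimension $2$). Hence $d = 2 \leq \dim A + 1 = 3$, so (BKR2) is satisfied. Theorem \ref{thm:BKR} then yields $\D^b(\km(A)) \simeq \D^b_{\langle\iota\rangle}(A)$, which combined with the first equivalence gives the result.

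The only step with any subtlety is the identification $A\sslash\langle\iota\rangle \cong \km(A)$ in positive odd characteristic; but since the two-torsion subscheme $A[2]$ is étale for $\Char(k)\neq 2$, the classical argument (blowing up the sixteen reduced fixed points and taking the quotient, which coincides with the $G$-Hilbert scheme under the BKR hypotheses) goes through verbatim. The rest is a direct application of the machinery already recorded in the excerpt.
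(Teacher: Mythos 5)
Your proof is correct and follows essentially the same route as the paper: identify $\km(A)$ with the $\langle\iota\rangle$-Hilbert scheme $A\sslash\langle\iota\rangle$, check \texttt{(BKR1)} and \texttt{(BKR2)}, and invoke Theorem \ref{thm:BKR} together with Lemma \ref{lemma:stackydescrip}. Your verifications of the two hypotheses (the sign $(-1)^{\dim A}=+1$ on the top form, and the dimension count for the fibered product) are just more explicit versions of what the paper asserts.
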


\begin{proof} 
We can view $\km(A)$ as the closure of the subset of reduced $\langle\iota\rangle$-clusters in the Hilbert scheme $\hilb^{2}(A)$ of two points on $A$. In this case {\texttt{(BKR2)}} is satisfied and the canonical sheaf is trivial as an $\langle \iota \rangle$-bundle. Thus Theorem \ref{thm:BKR} implies that there is a Fourier-Mukai transform
\[
    \Phi^{P} \colon \D^b(\km(A)) \to \D^b_{\langle \iota \rangle}(A) \simeq \D^b([A/\langle\iota\rangle])
\]
as $\Char(k) \neq 2$.
\end{proof}

Let us recall some general duality theorem for the $G$-equivariant derived category.
The $G$-action on $X$ also induces $G$-action on the triangulated category $\D^b(X)$. Thus we can also consider the $G$-equivariant category $\D^b(X)^{G}$ (\cf\cite[\S 2]{Elagin2014}). Under the assumption that $(|G|,p)=1$, we have an exact equivalence
\begin{equation}\label{eq:equicat}
    \D^b(X)^{G} \simeq \D^b_{G}(X)
\end{equation}
by \loccit Theorem 7.1. Therefore, we will not distinguish $\D^b_{G}(X)$ and $\D^b(X)^{G}$ in the rest of the paper if $G$ acts on $X$ and $(\lvert G \rvert , p)=1$.

Let $\widehat{G}= \Hom(G, k^*)$ be the character group of $G$. There exists an induced $\widehat{G}$-action on the $G$-equivariant derived category $\D^b_G(X)$ as follows. For each $\chi \in \widehat{G}$, one can define a line bundle on $[X/G]$ twisted by $\chi$ as
\[
\cL_{\chi} \coloneqq \cO_{X}^{can} \otimes_k \chi\cong(\cO_X, \id_{\cO_{G\times X}} \otimes \chi) \quad \text{ for $\chi \in \widehat{G}$ }.
\]
The action of $\chi$ on $\D^b([X/G])$ is given by tensoring $\cL_{\chi}$. 
\begin{defn}\label{def:dualaction}
For any $G$-equivariant object $(E,\lambda)$ in $\D^b_G(X)$, the $\widehat{G}$-action on a  $(E,\lambda)$ is given by twisting the linearization:
\[
\lambda \otimes_k \chi \coloneqq \left\{ E \xrightarrow{\lambda_g \cdot \chi(g)} g^* E \right\}_{g \in G} \quad \text{ for any $\chi \in \widehat{G}$}.
\]
This gives a $\widehat{G}$-action on the triangulated categories $\D^b_G(X) \simeq \D^b(X)^G$.
\end{defn}

Consider the identification $\D^b([X/G]) \simeq \D^b(X)^{G}$.
\begin{prop}[A. Elagin]\label{prop:dualities}
Assume $X$ is noetherian. There are exact equivalences
 \[ \D^b_G(X)^{\widehat{G}} \simeq (\D^b(X)^{G})^{\widehat{G}} \simeq \D^b(X).   
\]
\end{prop}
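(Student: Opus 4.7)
The plan is to separate the two equivalences. The first, $\D^b_G(X)^{\widehat{G}} \simeq (\D^b(X)^{G})^{\widehat{G}}$, will follow immediately from \eqref{eq:equicat}: I only need to check that the equivalence $\D^b(X)^G \simeq \D^b_G(X)$ intertwines the two $\widehat{G}$-actions, which is a direct verification since both actions are implemented by the same twisting of the $G$-linearization by a character $\chi$. So I reduce to proving the second equivalence $\D^b_G(X)^{\widehat{G}} \simeq \D^b(X)$.

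The second equivalence is a Pontryagin-type duality. I would exhibit the biadjoint pair
$$\mathrm{Ind}\colon \D^b(X) \rightleftarrows \D^b_G(X) : U,$$
where $U$ is the forgetful functor and $\mathrm{Ind}(\cE) = \bigoplus_{g \in G} g^*\cE$ carries the tautological $G$-linearization permuting summands. Biadjointness is a standard consequence of the hypothesis $(|G|, \Char k)=1$, which provides averaging projectors and makes finite-group actions ambidextrous. A projection-formula type calculation then identifies the endofunctor $\mathrm{Ind}\circ U$ on $\D^b_G(X)$ with $\bigoplus_{\chi \in \widehat{G}} (-) \otimes \cL_{\chi}$: concretely, for a $G$-equivariant object $(\cF,\lambda)$, the linearization $\lambda$ gives a $G$-equivariant isomorphism $\bigoplus_{g \in G} g^*\cF \simeq \cF \otimes_k k[G]$, and decomposing the regular representation $k[G] \simeq \bigoplus_{\chi \in \widehat{G}} k_{\chi}$ yields the claim.

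Crucially, the endofunctor $\bigoplus_{\chi} (-) \otimes \cL_{\chi}$ is precisely the comonad encoding the $\widehat{G}$-action on $\D^b_G(X)$ from Definition \ref{def:dualaction}. Hence coalgebras over this comonad are the same as $\widehat{G}$-equivariant objects in $\D^b_G(X)$, and Beck's monadicity theorem would yield $\D^b_G(X)^{\widehat{G}} \simeq \D^b(X)$, with the equivalence realized by $U$ and its inverse by $\mathrm{Ind}$ equipped with its natural $\widehat{G}$-linearization.

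The hardest part will be invoking monadicity in the triangulated, non-enhanced setting: conservativity of $U$ is clear, but the existence of $U$-split (co)equalizers needs to be established via the averaging idempotent $\frac{1}{|G|}\sum_{g \in G}\lambda_g$ on each object, where the coprimality hypothesis $(|G|,\Char k)=1$ is essential. This is the analysis carried out by Elagin in \cite{Elagin2014}, whose argument I would follow directly rather than reprove.
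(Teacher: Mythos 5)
Your proposal is correct and follows essentially the same route as the paper: both reduce the statement to Elagin's results, the first equivalence coming from \eqref{eq:equicat} and the second from the duality theorem \cite[Theorem 4.2]{Elagin2014}, which your comonadic sketch simply unpacks. The only substantive step in the paper's proof is the explicit verification that $\D^b_G(X)$ is idempotent complete (the hypothesis of Elagin's theorem), which your discussion of splitting the averaging idempotent $\frac{1}{|G|}\sum_{g\in G}\lambda_g$ addresses implicitly; it would be worth naming idempotent completeness as the precise condition being used.
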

\begin{proof}
The first exact equivalence is given by \eqref{eq:equicat}. For the second exact equivalence, we shall note that $\D^b_G(X)$ is idempotent complete for any noetherian scheme or algebraic stack $X$ as $\coh(X)$ is a Grothendieck category and  admits all products. Now we can apply the duality theorem \cite[Theorem 4.2]{Elagin2014} to conclude it.
\end{proof}

\begin{rmk}
The Proposition \ref{prop:dualities} is also referred for idempotent complete $\mathbb{C}$-linear triangulated categories in \cite{KrugSosna2015} as a crucial fact (\cf Proposition 2.2 \loccit). We use the slightly general form for field $k$ with $(\lvert G \rvert,\Char(k)) =1$.
\end{rmk}

With the same notations in Corollary \ref{cor:equivariantabelian},  we have $\widehat{\langle \iota \rangle} \cong \langle \iota^* \rangle$, where $\iota^*$ is the character dual to $\iota$, acting naturally on $\D^b([A/\langle \iota \rangle])$.
\begin{cor}\label{cor:dualKummerstack}
If $\Char(k) >2$, then $\D^b\left([A/\iota]\right)^{\langle \iota^* \rangle} \simeq \D^b(A)$. In particular, if $A$ is an abelian surface, then $\D^b(\km(A))^{\langle \iota^* \rangle} \simeq \D^b(A)$. \qed
\end{cor}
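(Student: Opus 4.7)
The statement is essentially a direct packaging of the preceding machinery, so the plan is to stitch together Lemma \ref{lemma:stackydescrip}, Proposition \ref{prop:dualities}, and Corollary \ref{cor:equivariantabelian}, checking only that the various $\widehat{G}$-actions are compatibly transported.

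First I would identify the dual group. Since $\Char(k)\neq 2$, the field $k$ contains the primitive square root of unity $-1$, so $\widehat{\langle\iota\rangle}=\Hom(\langle\iota\rangle,k^{*})$ is the cyclic group of order two generated by the character $\iota^{*}$ sending $\iota\mapsto -1$. In particular $(|\langle\iota\rangle|,\Char(k))=1$, so every hypothesis appearing in Proposition \ref{prop:dualities} is satisfied for $G=\langle\iota\rangle$ acting on the noetherian scheme $A$. Applying Lemma \ref{lemma:stackydescrip} to pass from $\D^{b}([A/\langle\iota\rangle])$ to $\D^{b}_{\langle\iota\rangle}(A)$, and then Proposition \ref{prop:dualities}, I obtain
\[
\D^{b}\bigl([A/\langle\iota\rangle]\bigr)^{\langle\iota^{*}\rangle}\simeq \D^{b}_{\langle\iota\rangle}(A)^{\widehat{\langle\iota\rangle}}\simeq \D^{b}(A),
\]
which is the first assertion. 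Here I should be careful to note, following Definition \ref{def:dualaction}, that the $\langle\iota^{*}\rangle$-action on $\D^{b}([A/\langle\iota\rangle])$ is exactly the one induced by tensoring with the character line bundle $\cL_{\iota^{*}}$, so the two descriptions of the action match up tautologically.

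For the second assertion with $A$ an abelian surface, I would invoke Corollary \ref{cor:equivariantabelian}, which produces an equivalence
\[
\Psi\colon \D^{b}(\km(A))\xrightarrow{\sim}\D^{b}\bigl([A/\langle\iota\rangle]\bigr)
\]
implemented by a Fourier--Mukai kernel coming from the Bridgeland--King--Reid construction. The $\langle\iota^{*}\rangle$-action on the right-hand side transports via $\Psi$ to an action on $\D^{b}(\km(A))$, and taking equivariant objects is preserved by any exact equivalence, so
\[
\D^{b}(\km(A))^{\langle\iota^{*}\rangle}\simeq \D^{b}\bigl([A/\langle\iota\rangle]\bigr)^{\langle\iota^{*}\rangle}\simeq \D^{b}(A).
\]

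The only genuine content beyond citation is the compatibility of actions: one must be sure that the $\widehat{G}$-action of Definition \ref{def:dualaction} on $\D^{b}_{G}(X)$ corresponds, under the stacky translation of Lemma \ref{lemma:stackydescrip}, to the geometric action by tensoring with $\cL_{\chi}$ on $\D^{b}([X/G])$. This is where I would expect the only potential obstacle, but it is immediate from unwinding the equivalence in Lemma \ref{lemma:stackydescrip}: twisting the linearization $\lambda_{g}$ by the scalar $\chi(g)$ is exactly what it means, on the quotient stack, to tensor with the character line bundle $\cO_{X}^{\mathrm{can}}\otimes_{k}\chi$. Once this compatibility is recorded, the corollary follows with no further work, and the statement is proved.
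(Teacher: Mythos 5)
Your proof is correct and follows exactly the route the paper intends: the corollary is stated with a bare \qed precisely because it is the combination of Proposition \ref{prop:dualities} (applied to $G=\langle\iota\rangle$ acting on the noetherian scheme $A$, using $(|G|,p)=1$ when $\Char(k)>2$) with Corollary \ref{cor:equivariantabelian} to transport the $\langle\iota^*\rangle$-action to $\D^b(\km(A))$. Your explicit check that the two descriptions of the $\widehat{G}$-action agree is a reasonable bit of extra care, but it introduces no new ideas beyond what the paper already sets up.
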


\section{Lifting of derived equivalences}\label{sec:descend}

\subsection{Equivariant derived equivalences}
In this part, we will recollect some preliminary facts on lifting theory and descent theory of equivariant equivalences in \cite{KrugSosna2015,ploog07} and extend them to all algebraically closed fields. With the notations as in \S 2,  we will always assume the order of $\lvert G \rvert$ is  coprime to $p$.  Let $X_1$ and $X_2$ be two projective $k$-schemes or quotient stacks equipped with $G$ actions. Let $\Phi \colon \D^b(X_1) \to \D^b(X_2)$ be an exact functor.

\begin{defn}\label{defn:liftdescent}
 An exact functor $\widetilde{\Phi} \colon \D^b_G(X_1) \to \D^b_G(X_2)$ is called a {\it descent} of $\Phi$ if it fits into the following 2-commutative diagrams
    \[ \begin{tikzcd}
    \D^b(X_1) \ar[r,"\Phi"] \ar[d,"\pi_{1,*}"] & \D^b(X_2) \ar[d,"\pi_{2,*}"] \\
    \D^b_G(X_1) \ar[r, "\widetilde{\Phi}"] & \D^b_G(X_2)
    \end{tikzcd}
    \quad
    \begin{tikzcd}
     \D^b_G(X_1) \ar[r,"\widetilde{\Phi}"] \ar[d,"\pi_{1}^*"] & \D^b_G(X_2) \ar[d,"\pi_{2}^{*}"] \\
    \D^b(X_1) \ar[r, "\Phi"] & \D^b(X_2),
    \end{tikzcd}
    \]
    where $\pi_i \colon X_i \to [X_i/G]$ are  structure morphisms of quotients. We may also call $\Phi$ {\it a lift} of $\widetilde{\Phi}$.
\end{defn}
\begin{rmk}
The push-forward and pull-back functors $\pi^*_i, \pi_{i,*}$ are both exact under the assumption that $(|G|,p)=1$ (\cf\cite[Lemma 3.8]{Elagin2014}). Thus the functors in Definition  \ref{defn:liftdescent} are all exact. 
\end{rmk}
Consider the $k$-linear triangluated category $\cT = \D^b_G(X)$. The non-trivial line bundles $\cL_{\chi}$ on $[X/G]$ induce a natural $\widehat{G}$-action on $\cT$ by taking tensor product (\cf Definition \ref{def:dualaction}). This leads to the following definition.
\begin{defn}\label{defn:equivariantfunctor}
Let $\cT_1$ and $\cT_2$ be two $k$-linear triangulated categories with $G_1$ and $G_2$ actions respectively.  Let $\gamma \colon G_1 \xrightarrow{\cong} G_2$ be a group isomorphism. An exact functor $\Phi\colon \cT_1 \to \cT_2$ is called $\gamma$-twisted equivariant if 
    \[
    \Phi \circ g^* \simeq \gamma(g)^* \circ \Phi \quad \text{for all } g \in G_1 .
    \]
    When $\gamma = \id_G$, the $\Phi$ will be called $G$-equivariant for simplicity.
If a descent $\widetilde{\Phi}$ of $\Phi$ is $\widehat{G}$-equivariant as an exact functor, then it will be called a \emph{$\widehat{G}$-equivariant descent} of $\Phi$.
\end{defn}

The following examples of equivariant functors is the most frequently used throughout this paper.
Let $\gamma\colon G \xrightarrow{\cong} G$ be an abstract group isomorphism, then there is an action of $G$ on $X_1 \times_k X_2$ (or $\D^b(X_1 \times X_2)$) by
\[
g \cdot (x_1,x_2) = (g\cdot x_1,\gamma(g) \cdot x_2).
\]
For instance, if $\gamma= \id$, then this action is just the diagonal action of $G$. The $G$-equivariant derived category of $X_1 \times_k X_2$ under the action given by $\gamma$ is denoted by $\D^b_{G_{\gamma}}(X_1 \times X_2)$. If there is an object $\cP=(P,\lambda)$ in $\D^b_{G_\gamma}(X_1 \times X_2)$ such that 
\[\Phi \simeq \mathbf{R}\!p_{2,*}(p_1^*(-)\otimes \cP),
\] then $\Phi$ is a $\gamma$-twisted $G$-equivariant exact functor. A $\gamma$-twisted $G$-equivariant functor of this form is called {\it integral} and will be denoted by $\Phi^{\cP}$, where the object $\cP$ is called its \emph{kernel}. In this case, we can forget the linearization  of the kernel $\cP$, which will defines an integral functor
\[
\Phi^{P} \colon \D^b(X_1) \to \D^b(X_2).
\]
It is not hard to check that $\Phi^{P}$ is a lift of $\Phi^{\cP}$.

A well-known fact is that a $G$-equivariant descent of a Fourier-Mukai transform is still an exact equivalence (\cf\cite[Proposition 3.10]{KrugSosna2015} or \cite[Lemma 5]{ploog07}). 
\begin{prop}\label{prop:equivalencelift}
Let $\cP=(P,\lambda)$ be a $G_{\gamma}$-equivariant object in $\D^b_{G_{\gamma}}(X_1 \times X_2)$ for some abstract isomorphism $\gamma\colon G \xrightarrow{\cong} G$. If $\Phi^{P}_{X_1 \to X_2}$ is an exact equivalence, then the integral functor  
\[\Phi\coloneqq\Phi^{\cP}\colon \D^b_G(X_1) \to \D^b_G(X_2)\] 
will also be an exact equivalence.
\end{prop}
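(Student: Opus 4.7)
The plan is to construct an explicit candidate quasi-inverse on the equivariant side and then transfer the non-equivariant adjunction isomorphisms upstairs using the conservativity of the forgetful functors $\pi_i^{*}\colon \D^b_G(X_i) \to \D^b(X_i)$. The key observation enabling this is that $\pi_i^{*}$ detects isomorphisms: a morphism in $\D^b_G(X_i)$ is invertible exactly when its image in $\D^b(X_i)$ is, since the inverse of a linearization-compatible morphism automatically respects the linearizations. Thus it suffices to produce unit and counit natural transformations on the equivariant side whose underlying non-equivariant versions are the usual isomorphisms.

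For the construction, since $\Phi^{P}$ is a Fourier--Mukai equivalence, its adjoints are integral transforms with kernel $Q \simeq P^{\vee}\otimes p_{2}^{*}\omega_{X_2}[\dim X_2]$, regarded as an object of $\D^b(X_2\times X_1)$ via the swap of factors; when $\Phi^{P}$ is an equivalence, the left and right adjoint kernels agree and both give quasi-inverses. I would then check that the $G_{\gamma}$-linearization $\lambda$ on $P$ canonically induces a $G_{\gamma^{-1}}$-linearization $\mu$ on $Q$: dualization, shift, and tensor product with pullbacks of canonical sheaves are $G$-equivariant operations (canonical bundles carry tautological linearizations coming from the $G$-actions on the $X_i$), while the swap of the two factors turns a $G_{\gamma}$-action into a $G_{\gamma^{-1}}$-action. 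Hence $\cQ\coloneqq(Q,\mu)$ lies in $\D^b_{G_{\gamma^{-1}}}(X_2\times X_1)$ and defines an integral functor $\Phi^{\cQ}\colon \D^b_G(X_2)\to \D^b_G(X_1)$ lifting $\Phi^{Q}$.

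Next, the adjunction unit $\eta\colon \id \Rightarrow \Phi^{Q}\circ\Phi^{P}$ and counit $\varepsilon\colon \Phi^{P}\circ\Phi^{Q}\Rightarrow \id$ are induced by canonical morphisms of convolution kernels on $X_1\times X_1$ and $X_2\times X_2$ respectively, and are both isomorphisms precisely because $\Phi^{P}$ is an equivalence. Performing exactly the same convolutions with the linearized kernels $\cP$ and $\cQ$ yields natural transformations $\widetilde{\eta}$ and $\widetilde{\varepsilon}$ between the identity and the compositions $\Phi^{\cQ}\circ \Phi^{\cP}$, $\Phi^{\cP}\circ \Phi^{\cQ}$ on the equivariant categories, whose images under $\pi_i^{*}$ are exactly $\eta$ and $\varepsilon$ by construction. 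Conservativity of $\pi_i^{*}$ then forces $\widetilde{\eta}$ and $\widetilde{\varepsilon}$ to be isomorphisms, so $\Phi^{\cP}$ is an equivalence with quasi-inverse $\Phi^{\cQ}$.

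The main technical obstacle lies in the second step: one must track the $G$-linearizations coherently through dualization, convolution of kernels along the middle factor, and flat base change, and verify that the swap isomorphism $X_1\times X_2\cong X_2\times X_1$ correctly intertwines the $G_{\gamma}$- and $G_{\gamma^{-1}}$-actions. These compatibilities are routine but bookkeeping-intensive; once settled, the transfer of (co)unit isomorphisms via conservativity is entirely formal, and no further input from the geometry of $X_1, X_2$ is needed beyond what already goes into the non-equivariant statement.
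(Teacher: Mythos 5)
There is a genuine gap at the step where you assert that ``performing exactly the same convolutions with the linearized kernels $\cP$ and $\cQ$ yields natural transformations $\widetilde{\eta}$ and $\widetilde{\varepsilon}$ \ldots whose images under $\pi_i^{*}$ are exactly $\eta$ and $\varepsilon$ by construction.'' The unit at the kernel level is a morphism $\cO_{\Delta_{X_1}} \to Q \circ P$ in $\D^b(X_1\times X_1)$, and for it to lift to the equivariant category it must be $G$-invariant for the linearizations you have put on the two sides, i.e.\ a morphism $\cO_{\Delta_{X_1}}^{\rm can}\to \cQ\circ\cP$. This is exactly the nontrivial point: the linearization that $\lambda$ and $\mu$ induce on $Q\circ P\cong \cO_{\Delta_{X_1}}$ is a priori only the canonical one twisted by some character $\chi\in\widehat{G}$ (the set of linearizations of $\cO_{\Delta_{X_1}}$ is a $\widehat{G}$-torsor, since $\Aut(\cO_{\Delta_{X_1}})=k^*$). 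If $\chi\neq 1$, then
\[
\Hom_{\D^b_G(X_1\times X_1)}\bigl(\cO_{\Delta_{X_1}}^{\rm can},\,\Delta_*\cL_{\chi}\bigr)\;=\;\bigl(k\cdot\chi\bigr)^{G}\;=\;0,
\]
so no nonzero equivariant morphism lifting $\eta$ exists at all, let alone an isomorphism. Your conservativity observation for $\pi_i^{*}$ is correct, but it cannot be applied until the equivariant lifts of the unit and counit are shown to exist, and that existence is precisely what may fail for the ``natural'' induced linearization unless you prove $\chi=1$ (which you do not address).

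The paper's proof is designed to sidestep exactly this obstruction: it does not attempt to lift the adjunction morphisms, but only identifies the composed equivariant kernel up to isomorphism as $\cQ\circ\cP\cong\Delta_*\cL_{\chi}$ for \emph{some} $\chi\in\widehat{G}$, and then observes that $\Phi^{\cQ}\circ\Phi^{\cP}\cong(-)\otimes\cL_{\chi}$ is an equivalence regardless of which $\chi$ occurs (and likewise for the other composite), which already forces $\Phi^{\cP}$ to be an equivalence. To repair your argument you would either have to adopt this workaround, or carefully define $\mu$ via equivariant Grothendieck--Verdier duality on the quotient stacks and verify that the resulting character is trivial; the latter is the ``bookkeeping'' you defer, and the character obstruction shows it is not vacuous.
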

The proof in \loccit also works for any algebraically closed field $k$ such that $(|G|,p)=1$. We sketch it here for  reader's convenience.
\begin{proof}

Take the kernel of right adjoint inverse of $\Phi$: $$Q= P^{\vee} \otimes p_{X_1}^* \omega_{X_1}[\dim X_1].$$ It admits a natural $G$-lineaization $\lambda'$ induced by $\lambda$. Denote $(Q,\lambda')$ by $\cQ$.
Note that the pull-back functor $\pi^*$ is just the forgetful functor of linearization.  Therefore $\pi^*(\cQ \circ \cP) \cong Q \circ P\cong  \cO_{\Delta_{X_1}}$. On the other hand, we have $\Aut(\cO_{\Delta_{X_1}}) = k^*$ in $\D^b(X_1 \times X_1)$.
Via computing the cohomology of $G$, we can show that the set of linearizations of $\cO_{\Delta_{X_1}}$ forms a principal homogeneous space under the action of $\widehat{G}=\Hom(G,k^*)$ (\cf\cite[Lemma 1]{ploog07}). In particular, since $\cO_{\Delta_{X_1}}$ admits the trivial linearization, the set of its linearizations is equal to $\widehat{G}$. It means that there is some $\chi \in \widehat{G}$ such that $\cQ \circ \cP \cong \Delta_*\cL_{\chi}$. Since tensoring line bundles will induce a derived equivalence, we can conclude that $\Phi$ is an exact equivalence.
\end{proof}
Suppose $G$ is a cyclic group. Then $\rH^2(G,k^*) =\rH^2(\widehat{G},k^*)=0$ by a direct computation. This implies that any Fourier-Mukai kernel $P$ (resp.~$\cP=(P,\lambda)$) admits a $G$-linearization (resp.~$\widehat{G}$-linearization). Now we can lift a $G$-equivariant derived equivalence along cyclic Galois coverings, i.e.~morphisms of smooth $k$-stacks $Y_i \to X_i=Y_i/G$\footnote{The quotient here is in stack-theoretical sense (\cf\cite{romagny05}).}.

\begin{cor}[{\cite[Proposition 4.3]{KrugSosna2015}}]\label{cor:cyclicdescent}
Let $Y_1 \to X_1$ and $Y_2 \to X_2$ be two cyclic Galois coverings. Suppose that there is an isomorphism $\gamma \colon \widehat{G} \xrightarrow{\cong} \widehat{G}$ . Then any $\gamma$-twisted equivariant Fourier-Mukai transform 
\[
\Phi^{\cP} \colon \D^b(X_1) \xrightarrow{\sim} \D^b(X_2)
\]
lifts to a derived equivalence
\[
\widetilde{\Phi} \colon \D^b(Y_1) \xrightarrow{\sim} \D^b(Y_2).
\] \qed
\end{cor}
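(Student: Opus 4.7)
The plan is to reduce the problem, via Elagin's duality (Proposition \ref{prop:dualities}), to an application of Proposition \ref{prop:equivalencelift} with the group $\widehat{G}$ playing the role of $G$. Using Lemma \ref{lemma:stackydescrip} together with \eqref{eq:equicat}, I identify $\D^b(X_i) \simeq \D^b_G(Y_i) \simeq \D^b(Y_i)^G$, so that Proposition \ref{prop:dualities} yields
\[
\D^b(Y_i) \simeq (\D^b(Y_i)^G)^{\widehat{G}} \simeq \D^b(X_i)^{\widehat{G}}.
\]
Constructing the desired $\widetilde{\Phi}$ is then reduced to producing a $\widehat{G}$-equivariant derived equivalence at the level of $\D^b(X_i)$.

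The hypothesis that $\Phi^\cP$ is $\gamma$-twisted $\widehat{G}$-equivariant exactly means that for every $\chi \in \widehat{G}$ there is an isomorphism $\cP \xrightarrow{\sim} (\cL_\chi \boxtimes \cL_{\gamma(\chi)^{-1}}) \otimes \cP$ of kernels in $\D^b(X_1 \times X_2)$, i.e.\ the object $\cP$ is $\widehat{G}_\gamma$-\emph{invariant} in the sense of the remark following \eqref{eq:cocyclecondition}. To feed $\cP$ into Proposition \ref{prop:equivalencelift} I must rigidify this invariance to a genuine $\widehat{G}_\gamma$-linearization satisfying the cocycle condition. Since the ambiguity in choosing these isomorphisms is parameterized by $\Aut_{\D^b(X_1 \times X_2)}(\cP) \supseteq k^*$, the obstruction class lives in $\rH^2(\widehat{G}, k^*)$; cyclicity of $G$ (and hence of $\widehat{G}$) forces this cohomology group to vanish, by the direct computation recalled just before the statement, so $\cP$ can be upgraded to an object of $\D^b_{\widehat{G}_\gamma}(X_1 \times X_2)$.

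With the $\widehat{G}_\gamma$-linearization in hand, Proposition \ref{prop:equivalencelift}, applied with $\widehat{G}$ in place of $G$, produces an exact equivalence $\Phi^\cP \colon \D^b(X_1)^{\widehat{G}} \xrightarrow{\sim} \D^b(X_2)^{\widehat{G}}$, which under the identifications of the first paragraph is the desired $\widetilde{\Phi} \colon \D^b(Y_1) \xrightarrow{\sim} \D^b(Y_2)$. The main obstacle is precisely the cohomological rigidification step: outside the cyclic setting the class in $\rH^2(\widehat{G}, k^*)$ can be nontrivial, in which case $\cP$ admits only a projective linearization and one would obtain at best a twisted equivalence rather than an honest derived equivalence between $\D^b(Y_1)$ and $\D^b(Y_2)$.
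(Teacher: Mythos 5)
Your argument is correct and is essentially the paper's own route: the paper leaves this corollary as a \qed, relying on the immediately preceding observation that $\rH^2(\widehat{G},k^*)=0$ for cyclic $G$ lets one upgrade the $\widehat{G}_\gamma$-invariance of the kernel to a genuine linearization, after which Proposition \ref{prop:equivalencelift} (applied with $\widehat{G}$ in place of $G$) and the Elagin duality of Proposition \ref{prop:dualities} give the equivalence $\D^b(Y_1)\simeq\D^b(Y_2)$. Your write-up fills in exactly these steps (note only that the obstruction argument uses the equality $\Aut(\cP)=k^*$, not merely the containment, which holds since $\cP$ is the kernel of an equivalence).
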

\subsection{Proof of Theorem 1.2 (i) and (ii)}

\begin{prop}[Proposition 3.1, \cite{Stellari2007}]\label{prop:stellari1} 
If there is a Fourier-Mukai transform
\[
\Phi^{P} \colon \D^b(A_1) \xrightarrow{\sim} \D^b(A_2),
\]
then $\D^b([A_1/\iota]) \simeq\D^b([A_2/\iota])$.
\end{prop}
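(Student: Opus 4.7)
The plan is to construct the desired equivalence $\D^b([A_1/\iota]) \simeq \D^b([A_2/\iota])$ by descending $\Phi^{P}$ along the $G = \langle\iota\rangle$-quotients, via Proposition \ref{prop:equivalencelift} applied to the diagonal action of $G$ on $A_1 \times A_2$ (i.e.\ $\gamma = \id$). To do this, I would equip the Fourier--Mukai kernel $P$ with a $G$-linearization, producing an object $\cP \in \D^b_{G_{\id}}(A_1 \times A_2)$. Then Proposition \ref{prop:equivalencelift} will give an equivalence $\Phi^{\cP} \colon \D^b_G(A_1) \xrightarrow{\sim} \D^b_G(A_2)$, which Lemma \ref{lemma:stackydescrip} identifies with the claimed equivalence $\D^b([A_1/\iota]) \simeq \D^b([A_2/\iota])$.

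The task thus reduces to showing that $P$, possibly after a modification that preserves its status as the kernel of an equivalence, admits a $G$-linearization. First I would check $G$-invariance: one wants $(\iota_{A_1} \times \iota_{A_2})^{*}P \simeq P$, equivalently $\iota_{A_2}^{*} \circ \Phi^{P} \simeq \Phi^{P} \circ \iota_{A_1}^{*}$. The key input is that $[-1]^{*}$ is a central element of the auto-equivalence group of $\D^b(A_i)$. By the classification of derived auto-equivalences of abelian varieties (Orlov over $\C$, Polishchuk and subsequent work in positive characteristic), conjugation by $\Phi^{P}$ sends $\iota_{A_1}^{*}$ to an auto-equivalence with the same cohomological action as $\iota_{A_2}^{*}$, hence to $\iota_{A_2}^{*}$ composed with a translation $T_{a}^{*}$ for some $a \in A_2(k)$. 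Since multiplication by $2$ on $A_2$ is surjective (as $\Char(k) \neq 2$), one may replace $P$ by $(1 \times T_{a/2})^{*}P$ and absorb this translation without destroying the property that the integral functor is an equivalence.

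Second, I would upgrade the isomorphism $(\iota \times \iota)^{*}P \simeq P$ to a genuine cocycle, i.e.\ a $G$-linearization. The obstruction lies in $\rH^{2}(G, k^{*})$, and for $G = \langle\iota\rangle \cong \ZZ/2$ with $k$ algebraically closed and $\Char(k) \neq 2$ this group vanishes because $k^{*}$ is $2$-divisible; the set of linearizations on an invariant object is then a nonempty torsor under $\widehat{G}$, exactly as in the cohomology computation used in the proof of Proposition \ref{prop:equivalencelift}. This gives the desired $G_{\id}$-equivariant kernel $\cP = (P,\lambda)$.

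The main obstacle is the commutation step in the second paragraph: arranging that an arbitrary Fourier--Mukai equivalence between abelian varieties respects $[-1]^{*}$ up to a translation. In characteristic zero this falls out of Orlov's derived Torelli theorem, which realizes derived equivalences as symplectic isomorphisms of the Mukai pairs $A_i \times \widehat{A}_i$, in which $[-1]$ is central. In positive characteristic one must invoke the analogous classification of Fourier--Mukai equivalences via symplectic isomorphisms of the Mukai pairs, where the centrality of $[-1]$ plays the same role. Once this input is available, the rest is formal, relying only on the descent machinery developed in Section \ref{equi-dc}.
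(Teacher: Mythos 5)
Your overall strategy is the same as the paper's: make the kernel $P$ invariant under $\tau=\iota_{A_1}\times\iota_{A_2}$, upgrade invariance to a $G$-linearization using $\rH^2(\ZZ/2,k^*)=k^*/(k^*)^2=0$, and then apply Proposition \ref{prop:equivalencelift} together with Lemma \ref{lemma:stackydescrip}. The linearization step and the final descent step are fine. The gap is in the invariance step. The classification you invoke (\cite[Corollary 3.4]{orlov02}, i.e.\ the kernel of the sequence $1\to \ZZ\times(A_2\times\widehat A_2)(k)\to \mathrm{Auteq}(\D^b(A_2))\to U(A_2,A_2)$) says that two equivalences with the same symplectic image differ by a shift, a translation \emph{and} a twist by a line bundle in $\Pic^0$. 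The shift is excluded since $\iota^*$ has order two, but the $\Pic^0$-twist is not: one only gets
\[
\tau^*P \;\cong\; T_{(a,0),*}P\otimes p_{A_1}^*\cL_\alpha,\qquad \cL_\alpha\in\Pic^0(A_1),
\]
and your assertion that the conjugate of $\iota_{A_1}^*$ is $\iota_{A_2}^*$ composed with \emph{only} a translation drops the factor $\cL_\alpha$. Your halving trick $P\mapsto T_{(a/2,0),*}P$ does correctly absorb the translation (and is a legitimate, arguably cleaner, alternative to the paper's use of semi-homogeneity of the kernel for that purpose), but after it you are left with $\tau^*P'\cong P'\otimes p_{A_1}^*\cL_\alpha$, which is not invariance, so the linearization argument cannot yet be applied.

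The missing step is closed exactly as the paper does for the combined line bundle: since $\Pic^0(A_1\times A_2)(k)$ is divisible and $\tau^*\cN\cong\cN^{\vee}$ for $\cN\in\Pic^0$, choose $\cN$ with $\cN^{2}\cong p_{A_1}^*\cL_\alpha$ and replace $P'$ by $\widetilde P=P'\otimes\cN$; then $\tau^*\widetilde P\cong P'\otimes p_{A_1}^*\cL_\alpha\otimes\cN^{\vee}\cong\widetilde P$, and tensoring by a line bundle preserves the property of being the kernel of an equivalence. With this correction your argument coincides in substance with the paper's proof of Proposition \ref{prop:stellari1}.
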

\begin{proof}
We repeat Stellari's proof for this statement here.
Let $\gamma$ be the abstract isomorphism $ [-1]_{A_1}  \mapsto[-1]_{A_2}$.
Let $\tau$ be the generator of $G_{\gamma}$. Let $T_{(x,y)}$ be the translation of a point $(x,y)$ on $A_1 \times A_2$. As $\Phi^{\tau^* P} \in \text{Eq}(\D^b(A_1),\D^b(A_2))$, by \cite[Corollary 3.4]{orlov02}, we have 
\[
\tau^* P \cong T_{(a,0),*} P \otimes p_{A_1}^* \cL_{\alpha}
\]
for some $a \in A_1(k)$, line bundle $\cL_{\alpha} \in \Pic^0(A_1) \cong \widehat{A}_1$. 

It is well-known that the Fourier-Mukai kernel $P$ is isomorphic to $\cE[i]$ for some semi-homogeneous sheaf $\cE$ on $A_1 \times A_2$, which implies 
\[
T_{(a,0),*}P \cong T_{(a,0),*} \cE [i] \cong P \otimes \cL_{\beta}
\]
for some $\cL_{\beta} \in \Pic^0(A_1 \times A_2)(k)$. Thus $\tau^* P \cong P \otimes \cL$, where $\cL= \cL_{\beta} \otimes p_A^*\cL_{\alpha}[i]$ . As $\Pic^0(A_1\times A_2)(k)$ is divisible, we can find a line bundle $\cN$ on $A_1 \times A_2$ such that $\cN^2 = \cL$. Let $\widetilde{P} = P \otimes \cN$. We can see
\begin{equation}\label{eq:tildeP}
\tau^* \widetilde{P} \cong \tau^* P \otimes \tau^* N \cong P \otimes \cL \otimes N^{\vee} \cong \widetilde{P}.
\end{equation}
On the other hand, since $\widetilde{P}$ is obtained from $P$ by tensoring line bundles, it also induces a Fourier-Mukai transform from $A_1$ to $A_2$. The isomorphism \eqref{eq:tildeP} ensures that $\widetilde{P}$ is $G_{\Delta}$-invariant, which also implies $\widetilde{P}$ admits a $G_{\gamma}$ linearization $\lambda$ as $\rH^2(G_{\gamma}, k^*)=0$. Therefore $\D^b([A/\iota]) \simeq \D^b([B/\iota])$ by Proposition \ref{prop:equivalencelift}.
\end{proof}

Combining the identification in Corollary \ref{cor:equivariantabelian}, the statement (i) in Theorem \ref{mainthm1} can be concluded. 

Krug and Sosna proved the converse direction for odd dimensional abelian varieties (\cf\cite[Proposition 5.13]{KrugSosna2015}). Their statement there is over $\mathbb{C}$, while we extend it to algebraically closed fields in positive characteristic.
\begin{prop}
Let $A_1$ and $A_2$ are two abelian varieties in dimension $2n+1$, $n \in \mathbb{Z}_{>0}$. Any Fourier-Mukai transform between Kummer stacks
\[
\Phi^{\cP} \colon \D^b([A_1/\iota]) \xrightarrow{\sim} \D^b([A_2/\iota])
\]
has a lift
\[
\widetilde{\Phi} \colon \D^b(A_1) \xrightarrow{\sim} \D^b(A_2).
\]
\end{prop}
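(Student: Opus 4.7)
The plan is to reduce the statement to Corollary \ref{cor:cyclicdescent} by verifying that $\Phi^{\cP}$ is compatible with the dual Galois action. Since $G = \langle \iota \rangle \cong \mathbb{Z}/2\mathbb{Z}$ and $\Char(k)>2$, each morphism $A_i \to [A_i/\iota]$ is a cyclic Galois $G$-torsor satisfying $(|G|,p)=1$. Because $\Aut(\widehat{G})=\{1\}$ here, Corollary \ref{cor:cyclicdescent} will produce the desired lift $\widetilde{\Phi}$ as soon as we show that $\Phi^{\cP}$ is $\mathrm{id}$-twisted $\widehat{G}$-equivariant, i.e.\ that it commutes with the autoequivalence $-\otimes \cL_\chi$ coming from the non-trivial character $\chi = \iota^*$ (see Definition \ref{def:dualaction}).

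The key input is the Serre functor $S_i$ on $\D^b([A_i/\iota])$. Although $\omega_{A_i}$ is trivial as a line bundle, the involution $\iota=-1$ acts on $T_0 A_i$ by $-1$, and hence on $\omega_{A_i}|_0 = (\wedge^{2n+1} T_0 A_i)^{\vee}$ by $(-1)^{2n+1} = -1$. Therefore, as a $G$-equivariant line bundle $\omega_{A_i} \cong \cL_\chi$, the canonical bundle $\omega_{[A_i/\iota]}$ corresponds to the non-trivial character, and $S_i \simeq (-\otimes \cL_\chi)\circ[2n+1]$.

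Since any exact equivalence commutes with the Serre functor and with cohomological shifts, the relation $\Phi^{\cP}\circ S_1 \simeq S_2 \circ \Phi^{\cP}$ immediately gives $\Phi^{\cP}\circ(-\otimes\cL_\chi)\simeq(-\otimes\cL_\chi)\circ\Phi^{\cP}$ after cancelling the shift $[2n+1]$. This is exactly the required $\widehat{G}$-equivariance in the sense of Definition \ref{defn:equivariantfunctor}. An application of Corollary \ref{cor:cyclicdescent} then yields the lift $\widetilde{\Phi}\colon \D^b(A_1)\xrightarrow{\sim}\D^b(A_2)$.

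The main obstacle is precisely the Serre-functor computation, which is the reason the odd-dimensional hypothesis is essential: for even $\dim A_i$, the canonical bundle $\omega_{[A_i/\iota]}$ is trivially $G$-linearized and $S_i$ reduces to a pure shift, so commutation with $S_i$ carries no information about $\cL_\chi$-equivariance. A secondary point to verify is that ``commutes with $S$'' (an isomorphism of functors) matches the notion of $\gamma$-twisted equivariance in Definition \ref{defn:equivariantfunctor}; this is immediate since that definition is phrased up to natural isomorphism. The vanishing $\rH^2(G,k^*)=0$ used inside Corollary \ref{cor:cyclicdescent} is automatic for cyclic $G$ of order coprime to $\Char(k)$.
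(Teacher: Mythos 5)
Your proposal is correct and follows essentially the same route as the paper's proof: identify the canonical bundle of $[A_i/\iota]$ with the non-trivially linearized line bundle $\cL_\chi$ in odd dimension, use that any exact equivalence commutes with the Serre functor to get $\widehat{G}$-equivariance, and invoke Corollary \ref{cor:cyclicdescent}. The only difference is that you spell out the computation of the $\iota$-action on $\omega_{A_i}|_0$ by $(-1)^{2n+1}$, which the paper delegates to a citation of Krug--Sosna.
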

\begin{proof}
The stack $[A/\iota]$ has non-trivial torsion canonical bundle when $\dim A$ is odd (\cf\cite{KrugSosna2015}). 
Let $X_i = [A_i /\iota]$.
We can take the abstract isomorphism $\langle K_{X_1}\rangle \xrightarrow[\sim]{\gamma} \langle K_{X_2} \rangle$, whose actions are given by tensoring. Notice that $K_{X_i} \otimes (-)$ is just the shift of Serre functor $S_i[-\dim_{X_i}]$ on $\D^b(X_i)$. Therefore $\Phi^{\cP}$ is $\gamma$-twisted equivariant as Serre functor commutes with all Fourier-Mukai transforms. Thus any $\D^b([A_1/\iota]) \xrightarrow{\sim} \D^b([A_2/\iota])$ can be lifted to an exact equivalence $\D^b(A_1) \simeq \D^b(A_2)$ by the Corollary \ref{cor:cyclicdescent}.
\end{proof}

This proves  Theorem \ref{mainthm1} (ii).

\begin{rmk} 
We wonder if the Kummer stacks $[A_1/\iota] \cong [A_2/\iota]$ will imply $\D^b(A_1) \cong \D^b(A_2)$ or not. The same question can be asked for two pairs $(X_1,G)$ and $(X_2,G)$ satisfying \texttt{(BKR1)} and \texttt{(BKR2)} such that  their quotients have isomorphic crepant resolution.
\end{rmk}

\subsection{Lifting Kummer structures}\label{subsec:liftingKummerstructure}

 To prove the Theorem (iii), we need to use the lifting of Kummer surfaces. A technical lemma is 
\begin{lemma}\label{lemma:liftingkummer}
Let $X$ be a K3 surface over $k$. Suppose $\cX$ is a relative K3 surface, lifting $X$ to some finite extension $V$ of the ring of Witt vectors $W(k)$, such that the specialization map is an isomorphism
\begin{equation}\label{lift-ns}
    \NS(\cX_{F}) \xrightarrow{\sim} \NS(X).
\end{equation}
For any abelian surface $A$ satisfying $\km(A) \cong X$, there is an abelian scheme $\cA$ over $V$ such that $\cA_{k} \cong A$ and $\cX \cong \km(\cA)$.
\end{lemma}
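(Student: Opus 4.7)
The plan is to reverse-engineer the Kummer construction relative to $V$. Fix an abelian surface $A$ with $\km(A)\cong X$. On $X$ one has the 16 pairwise disjoint exceptional smooth rational curves $E_1,\dots,E_{16}$ above the 2-torsion $A[2]$, and Nikulin's criterion supplies a class $\delta\in\NS(X)$ with $2\delta=\sum_i E_i$ encoding the Kummer structure, namely the double cover $\mathrm{Bl}_{A[2]}(A)\to X$ ramified along the $E_i$. Via the specialization isomorphism~\eqref{lift-ns}, the $E_i$ and $\delta$ lift to classes $\widetilde E_i,\widetilde\delta\in\NS(\cX_F)$ with $2\widetilde\delta=\sum\widetilde E_i$. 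The strategy is to realize each $\widetilde E_i$ as an honest curve on $\cX$, take the relative double cover cut out by $\widetilde\delta$, and contract its sixteen exceptional $(-1)$-curve families to obtain the sought abelian scheme $\cA$.

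First, I would show that each $\widetilde E_i$ is represented by a smooth rational curve on $\cX_F$. Since $\widetilde E_i^{\,2}=-2$, Riemann-Roch on the K3 surface $\cX_F$ forces $\pm\widetilde E_i$ to be effective; the compatibility of~\eqref{lift-ns} with the intersection form and with any relative polarization of $\cX/V$ pins down the sign in favor of $\widetilde E_i$. Specialization to $X$ rules out a decomposition of $\widetilde E_i$ into strictly smaller effective classes, so $\widetilde E_i$ is represented by an irreducible smooth rational curve. The orthogonality $\widetilde E_i\cdot\widetilde E_j=0$ for $i\neq j$ inherited from $X$ keeps the sixteen curves pairwise disjoint. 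These curves are infinitesimally rigid on the K3 surface $\cX_F$, and by standard deformation theory over the henselian base $V$ they extend uniquely to flat families $\cE_i\subset\cX$ of smooth rational curves over $V$ with $\cE_i\otimes_V k=E_i$.

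Second, the class $\widetilde\delta$ together with the relation $2\widetilde\delta=\sum\cE_i$ furnishes the data of a cyclic double cover $p\colon\widetilde{\cA}\to\cX$ over $V$, smooth over $V$, whose special fiber is the blow-up $\mathrm{Bl}_{A[2]}(A)\to X$ and whose generic fiber is the analogous double cover on $\cX_F$. The reduced preimages $p^{-1}(\cE_i)_{\mathrm{red}}$ are relative $\mathbb P^1$-families with fiberwise self-intersection $-1$, so the relative Castelnuovo criterion produces a simultaneous contraction $\widetilde{\cA}\to\cA$ over $V$ with $\cA$ smooth and proper over $V$; its special fiber is $A$ and its generic fiber is an abelian surface $\cA_F$ over $F$ lifting $A$.

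The main obstacle, and the last step, is to upgrade $\cA/V$ to a genuine abelian scheme and then to identify $\km(\cA)$ with $\cX$. For the group structure, the deck involution of $p$ descends to an involution $\iota_\cA$ of $\cA$ whose fixed locus on each geometric fiber is the 2-torsion subgroup; picking the identity section to be the $V$-section of this fixed locus specializing to the origin of $A$, the good reduction criterion for abelian varieties over a discrete valuation ring together with the rigidity lemma for abelian schemes over a henselian base then promotes $\cA$ to an abelian scheme over $V$ with $\iota_\cA$ acting as the inversion $[-1]$. By construction the quotient $\cA/\iota_\cA$ is a relative Kummer variety whose minimal simultaneous resolution is precisely $\cX$, giving $\km(\cA)\cong\cX$ over $V$.
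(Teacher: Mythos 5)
Your proof is correct and follows essentially the same route as the paper's, which adapts Shioda's argument: lift the sixteen exceptional classes to honest curves on $\cX$, use the $2$-divisibility of their sum to form a relative double cover branched along them, and contract the resulting families of $(-1)$-curves to obtain the abelian scheme. You supply more detail than the paper does on why the lifted classes are represented by pairwise disjoint smooth rational curves and on why the contracted family carries an abelian scheme structure, but the skeleton of the argument is identical.
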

\begin{proof}
We may follow the idea of \cite[Proposition 1.1]{shioda78} to prove the existence of the lifting.   Let $E_a$ be the exceptional curve on $X$ with respect to the $2$-torsion point $a \in A[2]$. It has a unique Cartier divisor extension $\cE_a \in \Pic(\cX)$ by the construction. Moreover, the divisor
    \[
    \cE \coloneqq \sum_{a \in A[2]} \cE_{a} \in \Pic(\cX)
    \]
    is $2$-divisible as $\cE$ is equal to the extension of $\frac{1}{2} \sum_{a \in A[2]} E_a \in \NS(X)$ via \eqref{lift-ns}. Therefore, we have a double covering of $V$-schemes
    \[
    \pi \colon \cY \longrightarrow \cX,
    \]
    which is branched along $\cE \subset \cX$. We can contract the curves $\cC_a \subset \cY$ lying over $\cE_a$ to points:
    \[
    f \colon \cY \longrightarrow \cA,
    \]
    where $\cA$ is a smooth proper scheme over $V$. Thus we can see $\cA$ is the required abelian scheme since $\cX \cong \cY/ \iota \cong \km(\cA)$ where $\iota$ is the involution of $\cY$ induced by the double covering $\pi$.
\end{proof}

\subsection{Derived Torellli theorem and specialization}
Let $A_1$ and $A_2$ be two abelian varieties. Consider the set  of \emph{symplectic isomorphisms}
\[
U(A_1,A_2)=\left\{ f = \begin{psmallmatrix}
f_1 & f_2\\
f_3& f_4
\end{psmallmatrix}\colon A_1 \times \widehat{A}_1 \xrightarrow{\sim}  A_2 \times \widehat{A}_2 \Big| f^{-1}= \widetilde{f} \coloneqq \begin{psmallmatrix}
\hat{f_4} & -\hat{f_2} \\
-\hat{f_3} & \hat{f_1}
\end{psmallmatrix}\right\}.
\]
Let $\text{Eq}(\D^b(A_1), \D^b(A_2))$ be the set of exact equivalences from $\D^b(A_1)$ to $\D^b(A_2)$. Then the derived Torelli theorem for abelian varieties asserts 
\begin{thm}[Orlov--Polishchuk]\label{thm:OrlovPolischuk}
There is a morphism
\begin{equation}
\label{eq:symplectic}
\gamma_{A_1,A_2}\colon \text{Eq}\left(\D^b(A_1),\D^b(A_2)\right) \to U(A_1,A_2).
\end{equation}
If $U(A_1,A_2) \neq \emptyset$, then $A_1$ and $A_2$ are Fourier-Mukai partners (\cf\cite{orlov02} or \cite{Polishchuk2003}).
\end{thm}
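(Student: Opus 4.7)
The plan is to follow Orlov's strategy and extract from any equivalence $\Phi = \Phi^P$ the induced action on the natural group of ``translation--tensor'' autoequivalences of $\D^b(A_1)$. Concretely, for each $(a,L) \in A_1(k) \times \widehat{A}_1(k)$ the functor $T_{a}^{*}(-) \otimes L$ is an autoequivalence of $\D^b(A_1)$, and together these assemble into a subgroup of $\Aut(\D^b(A_1))/\ZZ[1]$ isomorphic to $A_1 \times \widehat{A}_1$. Conjugation by $\Phi$ carries this subgroup into the analogous subgroup of $\Aut(\D^b(A_2))/\ZZ[1]$, and the content of \cite[Corollary~3.4]{orlov02} (already invoked in Proposition \ref{prop:stellari1}) is that each such conjugate is again of translation--tensor form $T_{a'}^{*}(-) \otimes L'$. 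Setting $f(a,L) = (a',L')$ defines $\gamma_{A_1,A_2}(\Phi)$; applying the same recipe to a quasi-inverse exhibits an inverse and shows $f$ is a group isomorphism $A_1 \times \widehat{A}_1 \xrightarrow{\sim} A_2 \times \widehat{A}_2$.

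To see $f$ actually lands in $U(A_1, A_2)$, one verifies the condition $f^{-1} = \widetilde{f}$ by tracking how $\Phi$ interacts with the Poincar\'e bundle. The commutator pairing $((a,L),(b,M)) \mapsto L(b)\,M(a)^{-1}$ on $A_i \times \widehat{A}_i$ is encoded in a canonical central extension by $\G_m$ (the Mukai--Heisenberg group), and any group automorphism induced by conjugation by $\Phi$ must preserve this pairing. A direct block-matrix computation then shows that preservation of the pairing is exactly the identity $f^{-1}=\widetilde{f}$ with the displayed block form involving the dual morphisms $\widehat{f}_i$.

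For the converse, suppose $f \in U(A_1, A_2)$. The task is to produce a Fourier-Mukai kernel $P$ on $A_1 \times A_2$ that both induces an equivalence and satisfies $\gamma_{A_1,A_2}(\Phi^P) = f$. The idea is to view the graph $\Gamma_f \subset (A_1 \times \widehat{A}_1) \times (A_2 \times \widehat{A}_2)$ as a Lagrangian subgroup under the natural skew pairing and realize it by a semi-homogeneous sheaf $P$ on $A_1 \times A_2$ in the sense of Mukai, whose associated Mukai--Heisenberg data recovers $\Gamma_f$. Polishchuk \cite{Polishchuk2003} carries this out in a characteristic-free fashion. That $\Phi^P$ is an equivalence may then be verified via the Bondal--Orlov / Bridgeland fully-faithfulness criterion, using semi-homogeneity of $P$ to reduce the orthogonality check on point-sheaves to an explicit cohomological computation on abelian varieties.

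The main obstacle will be the converse direction: in positive characteristic one cannot appeal to Mukai-lattice or Hodge-theoretic arguments, so the construction of $P$ from $f$ must be performed intrinsically via semi-homogeneous sheaves, and one has to verify by hand that the resulting integral functor is an equivalence rather than merely fully faithful. This characteristic-free kernel construction and equivalence check are precisely the technical core of \cite{Polishchuk2003}, and will be cited as a black box.
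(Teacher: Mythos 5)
The paper does not actually prove this theorem: it is quoted verbatim from the literature, with the proof deferred entirely to \cite{orlov02} and \cite{Polishchuk2003}. So there is no internal argument to compare against; the relevant question is whether your outline faithfully reflects the cited proofs, and it does. The construction of $\gamma_{A_1,A_2}$ by conjugating the translation--tensor autoequivalences $T_a^*(-)\otimes L$ and invoking Orlov's result that such conjugates remain of translation--tensor type, the identification of the condition $f^{-1}=\widetilde{f}$ with preservation of the skew pairing coming from the Poincar\'e bundle (equivalently, the Heisenberg/Mukai central extension by $\G_m$), and the converse via realizing the graph of $f\in U(A_1,A_2)$ as a Lagrangian correspondence supported on a semi-homogeneous kernel --- this is precisely the Orlov--Polishchuk strategy, and Polishchuk's treatment is indeed the characteristic-free one that the present paper needs. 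Two small points you gloss over but which are genuine steps in the cited proofs: (1) one must check that the set-theoretic assignment $(a,L)\mapsto(a',L')$ is a morphism of abelian varieties, not merely a group isomorphism on $k$-points (Orlov does this by working with the kernel of $\Phi$ in families); (2) in the converse direction the semi-homogeneous sheaf attached to $\Gamma_f$ exists only after one checks $\Gamma_f$ is isotropic of the right dimension, which is exactly where the condition $f^{-1}=\widetilde{f}$ is used. Since the paper black-boxes the entire statement, citing these steps to \cite{Polishchuk2003} as you do is entirely consistent with the paper's own level of rigor.
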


The following lemma shows that derived equivalence is preserved under smooth specialization. 
\begin{lemma}\label{lemma:specializationofDE}
For two abelian schemes $\cA_1$ and $\cA_2$ over $V$, any derived equivalence $\D^b(\cA_{1,F}) \xrightarrow{\sim} \D^b(\cA_{2,F})$ has a specialization $\D^b(\cA_{1,k}) \to \D^b(\cA_{2,k})$, which is also a derived equivalence.
\end{lemma}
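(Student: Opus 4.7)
The plan is to represent $\Phi_F$ by a Fourier--Mukai kernel, extend the kernel to a perfect complex on $\cA_1\times_V\cA_2$, and verify that the specialization of this kernel gives an equivalence. By Orlov's representability theorem, write $\Phi_F\simeq\Phi^{P_F}$ for some $P_F\in\D^b(\cA_{1,F}\times_F\cA_{2,F})$. Since $\cA_1\times_V\cA_2$ is smooth projective over the noetherian base $V$, one can represent $P_F$ by a bounded complex of locally free sheaves, extend each term to a $V$-flat coherent sheaf via its flat closure, and lift the differentials after clearing denominators by a suitable power of a uniformiser of $V$. The relations $d^{i+1}d^i=0$ propagate from the generic fibre because the terms are $V$-flat, yielding a perfect complex $\widetilde P$ on $\cA_1\times_V\cA_2$ whose generic fibre is $P_F$. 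Set $\Phi_k\coloneqq \Phi^{\widetilde P_k}\colon \D^b(\cA_{1,k})\to\D^b(\cA_{2,k})$.

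To see that $\Phi_k$ is an equivalence, apply Theorem~\ref{thm:OrlovPolischuk} to $\Phi_F$ and obtain a symplectic isomorphism $f_F\in U(\cA_{1,F},\cA_{2,F})$. Its four components are homomorphisms of abelian varieties over $F$; since $\cA_i$ and $\widehat{\cA}_i$ are abelian schemes over the complete DVR $V$, the N\'eron mapping property extends each uniquely to $V$ and thereby produces $\widetilde f\colon \cA_1\times\widehat{\cA}_1\to\cA_2\times\widehat{\cA}_2$. The symplectic relation defining membership in $U$ is closed, holds on the dense generic fibre, and so holds on all of $V$; in particular $\widetilde f_k\in U(\cA_{1,k},\cA_{2,k})$.

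Next, form the right-adjoint kernel $Q_F=P_F^{\vee}\otimes p_1^*\omega_{\cA_{1,F}}[\dim\cA_1]$, extend it to $\widetilde Q$ over $V$ by the same procedure, and consider the convolutions $\widetilde Q\circ\widetilde P$ and $\widetilde P\circ\widetilde Q$. By flat base change each restricts on the generic fibre to $\cO_{\Delta}$ up to shift and a degree-zero line-bundle twist (as in the proof of Proposition~\ref{prop:stellari1}), and the uniqueness of the extended symplectic datum $\widetilde f$ pins these convolutions down up to the same shape integrally, hence on the closed fibre. Restricting to the closed fibre identifies $\Phi_k$ with the Fourier--Mukai equivalence associated to $\widetilde f_k$ via Theorem~\ref{thm:OrlovPolischuk}. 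The main obstacle is exactly this last step: a morphism of $V$-flat perfect complexes which is invertible on the generic fibre need not remain so on the closed fibre, so one cannot naively specialize the unit and counit of the adjunction $(\Phi_F,\Psi_F)$. What saves the argument is the extremely rigid structure of Fourier--Mukai kernels on abelian varieties: the kernel associated to a prescribed symplectic isomorphism is determined up to shift and $\Pic^{0}$-twist, both of which are manifestly compatible with specialization across $V$.
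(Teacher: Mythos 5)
Your middle paragraph is, by itself, a complete proof of the lemma and it is essentially the paper's own argument: apply Theorem~\ref{thm:OrlovPolischuk} to $\Phi_F$ to obtain $f_F\in U(\cA_{1,F},\cA_{2,F})$, extend $f_F$ (and $\widetilde f_F$) over $V$ --- the paper uses the Matsusaka--Mumford theorem where you use the N\'eron mapping property, which amounts to the same thing for abelian schemes over a DVR --- observe that the symplectic condition is closed so $f_k\in U(\cA_{1,k},\cA_{2,k})$, and then invoke the converse direction of Theorem~\ref{thm:OrlovPolischuk} ($U(A_1,A_2)\neq\emptyset$ implies $A_1,A_2$ are Fourier--Mukai partners) to produce the equivalence on the closed fibre. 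Since the lemma only asserts the \emph{existence} of a derived equivalence between the closed fibres obtained by specialization, you can stop there.

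The surrounding material (extending the kernel $P_F$ to a perfect complex $\widetilde P$ on $\cA_1\times_V\cA_2$ and arguing that $\Phi^{\widetilde P_k}$ is an equivalence) attempts something strictly stronger than the lemma, and the gap you flag at the end is real and is not closed by the rigidity statement you invoke. The extension $\widetilde P$ produced by your term-by-term procedure is far from canonical --- one may modify any extension by complexes supported on the closed fibre without changing the generic fibre --- so nothing in the construction controls $\widetilde P_k$, and in particular nothing identifies $\widetilde Q_k\circ\widetilde P_k$ with a shifted $\Pic^0$-twist of $\cO_\Delta$. The rigidity you appeal to (the kernel of an equivalence inducing a prescribed symplectic isomorphism is determined up to shift and $\Pic^0$-twist) is a statement about kernels that are \emph{already known} to induce equivalences; applying it to $\widetilde P_k$ presupposes exactly what you are trying to prove. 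If you want a genuine specialization of the kernel rather than merely of the symplectic datum, you would need to choose the extension more carefully (e.g.\ as a $V$-flat sheaf kernel coming from a semi-homogeneous sheaf, or by rerunning the Orlov--Polishchuk construction over $V$ starting from the extended $\widetilde f$), none of which is needed for the statement at hand.
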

\begin{proof}
It is known that any derived equivalence $\D^b(\cA_{1,F}) \xrightarrow{\sim} \D^b(\cA_{2,F})$ induces a symplectic isomorphism $f_F \colon \cA_{1,F} \times \widehat{\cA}_{1,F} \xrightarrow{\sim}\cA_{2,F} \times \widehat{\cA}_{2,F}$.
By using the Matsusaka-Mumford theorem (\cf\cite[Chapter I. Corollary 1]{MatsusakaMumford}), $f_F$ can be extended to an isomorphism $f \colon \cA_1 \times \widehat{\cA}_1 \xrightarrow{\sim}\cA_2 \times \widehat{\cA_2}$ such that the restriction $f_k$ is a specialization of $f_F$. 
On the other hand, we can also take the specialization of $\widetilde{f}_F$. As $(\widetilde{f} \circ f)_F = \widetilde{f}_F \circ f_F = \id_{\cA_{1,F} \times \widehat{\cA}_{1,F}}$. The graph of $\widetilde{f} \circ f$ is the diagonal of $\cA \times \widehat{\cA}$. Thus the specialization $f_k$ is also a symplectic isomorphism. Hence the derived equivalence $\D^b(\cA_{1,k}) \xrightarrow{\sim} \D^b(\cA_{2,k})$ is from Theorem \ref{thm:OrlovPolischuk}.
\end{proof}
  
\subsection{Proof of Theorem \ref{mainthm1} (iii): finite height case} 
 Given two abelian surfaces $A_1$ and $A_2$, if there is an exact derived equivalence $\D^b(A_1) \simeq \D^b(A_2)$, then we have $\D^b(\km(A_1)) \cong \D^b(\km(A_2))$ by Proposition \ref{prop:stellari1}. Then we have 
$$\km(A_1)\cong \km(A_2)$$
because the Kummer surface does not have non-trivial Fourier-Mukai partners ({\it cf.}~\cite[Theorem 1.1]{LO15}). 

Recall that an abelian surface is called of finite height if it is not supersingular. Assume that $A_1$ and $A_2$ are of finite heights. In this case,  if $X$ is a Kummer surface isomorphic to $\km(A_1)\cong \km(A_2)$, then $X$ is a K3 surface with finite height.  There is a N\'eron-Severi lattice preserving lifting $\cX$ of $X$ over some $V$ (\cf\cite[Corollary 4.2]{LieblichMaulik}). By Lemma \ref{lemma:liftingkummer}, there exist relative abelian surfaces $\cA_1$ and $\cA_2$ over $V$ such that $$\km(\cA_1)\cong \cX\cong \km(\cA_2).$$ 
Due to \cite[Theorem 0.1 (1)]{HosonoLianEtAl2003}, the generic fibers of $\cA_1$ and $\cA_2$ are geometrically derived equivalent. By using the standard spreading out argument and \cite[Lemma 2.12]{orlov02}, we can find a finite extension $V'$ of $V$ such that $\D^b(A_{1,F'}) \xrightarrow{\sim} \D^b(A_{2,F'})$, where $F'$ is the fraction field of $V'$. The assertion  follows from Lemma \ref{lemma:specializationofDE}.

\section{Supersingular derived Torelli theorem}\label{sec:FMpartnersabelian}
In this section, we will focus on derived categories of supersingular abelian varieties. There is a cohomological realization of the derived Torelli theorem for  supersingular abelian varieties using  supersingular abelian crystals and supersingular K3 crystals. 

Throughout the rest part, we fix an algebraically closed field $k$ in characteristic $p$. We also denote $W=W(k)$ for the ring of Witt vectors of $k$ and $\sigma \colon W \to W$ for the Frobenius morphism.

\subsection{Abelian crystals and K3 crystals} For supersingular abelian varieties, we can give a cohomological realization of Theorem \ref{thm:OrlovPolischuk}  via Ogus' supersingular Torelli theorem (\cite{Og79}).

Let $A$ be an abelian variety of dimension $g$ over $k$.  Recall that $A$ is {\it supersingular} if the crystalline cohomology $\rH_{\crys}^1(A/W)$ is purely of slope $\frac{1}{2}$ as a weight one $F$-crystal with natural Frobenius. Over an algebraically closed field, it is also equivalent to say that $A$ is isogenous to a $g$-fold product of supersingular elliptic curves (\cf\cite[Theorem (4.2)]{oort74}). 
\begin{defn}
An abelian crystal of genus $g$ is a $F$-crystal $(H,\varphi)$ of rank $2g$ endowed with an isomorphism between $F$-crystals $\tr\colon \Lambda^{2g}H \to W(-n)$, whose Hodge numbers of Hodge polygon are both equal to $g$. The isomorphism $\tr$ is called the trace of abelian crystal $(H,\varphi)$.
\end{defn}

As its name indicates, for a $g$-dimensional abelian variety $A$, the $F$-crystal $\rH^1_{\crys}(A/W)$ is an example of abelian crystal. The trace map of $\rH^1_{\crys}(A/W)$ is given by 
\[
\Lambda^{2g} \rH^1_{\crys}(A/W) \cong \rH^{2g}_{\crys}(A/W) \cong W(-n).
\]
Here the first isomorphism is coming from the multiplication of $A$ and the cup-product of its crystalline cohomology. 

The crystalline Torelli theorem of supersingular abelian varieties states that, for any integer $g \geq 2$, there is a bijection
\begin{equation}
    \left\lbrace \begin{matrix}\text{isomorphism classes of } \\
    \text{ supersingular abelian varieties}\\
    \text{of genus $g$}\end{matrix} \right\rbrace \xrightarrow[H^1_{\crys}(-/W)]{\sim} \left\lbrace \begin{matrix}
    \text{isomorphism classes of} \\
    \text{supersingular abelian crystals} \\
    \text{ of genus $g$}\end{matrix} \right\rbrace_.
\end{equation}
See \cite[Theorem 6.2]{Og79}. 

The second crystalline cohomology of an abelian surface is also equipped with a structure called \emph{K3-crystal} which is defined as follows.
\begin{defn}
	We say a $F$-crystal $(H,\varphi)$ is K3-crystal with rank $n$, if there is a symmetric bilinear form $\langle, \rangle_H$ on $H$, satisfying:  
	\begin{enumerate}
	    \item  $H$ is of weight $2$, that means $p^2H \subset \im(\varphi)$; 
	    \item the Hodge number $h^0(H)=1$, that means $\varphi \otimes \id_k$ is of rank 1; 
	    \item $\langle, \rangle_H$ is perfect; 
	    \item $\langle \varphi x, \varphi y \rangle_H = p^2 \sigma\langle x, y \rangle_H$ for any $x,y \in H$.
	\end{enumerate}
Inside a K3-crystal $(H,\varphi)$, there is a natural $\mathbb{Z}_p$-lattice defined as
\[
T_H \coloneqq H^{\varphi =p} =\left\{ x \in H  \big| \varphi(x) = px \right\}
\]
equipped with bilinear form $\langle,\rangle_{T_H}$ induced from $H$. 
A K3-crystal is called \emph{supersinguar} if it is purely of slope 1. If $(H,\varphi)$ is supersingular of rank $n$, then its Tate module $T_H$ is a free $\Z_p$-module of rank $n$, which comes from the definition of pure of slope $1$. Its discriminant is equal to $p^{-2\sigma_0}$ for some integer $\sigma_0 \geq 1$, which is called the \emph{Artin invariant} of $H$.
\end{defn}

Let $A$ be an abelian surface over $k$. Then $\rH^2_{\crys}(A/W)$ is naturally endowed with a K3-crystal structure. Consider the canonical isomorphism
\[\rH^2_{\crys}(A/W) \cong \Lambda^2\rH^1_{\crys}(A/W),
\]
one can see that $A$ is supersingular if and only if $\rH^2_{\crys}(A/W)$ is supersingular as a K3-crystal. An important observation due to Ogus is that the functor $\Lambda^2$ forms an equivalence from the category of supersingular abelian crystals of genus 2 to the category of supersingular K3 crystals of rank 6 (cf.\ \cite[Proposition 6.9]{Og79}). Thus the crystalline Torelli theorem for supersingular abelian surfaces can be rephrased in terms of K3-crystals, that means
two supersingular abelian surfaces $A_1$ and $A_2$ are isomorphic if and only if
\[
\rH^2_{\crys}(A_1/W)\cong \rH^2_{\crys}(A_2/W).
\]
\begin{rmk}\label{rmk:pp}
This also implies that any supersingular abelian surface $A$ is principal polarized, since there is a natural isomorphism of K3-crystals $\rH^2_{\crys}(A/W) \xrightarrow{\sim} \rH^2_{\crys}(\widehat{A}/W)$. This can also deduced from the classification of N\'eron-Severi lattices of supersingular abelian surfaces (\cf\cite[Lemma 6.2]{FL18}).
\end{rmk}

One can characterize $K3$-crystals via the characteristic space.  Let $H$ be a supersingular K3 crystal with Artin invariant $\sigma_0$. We have the an orthogonal decomposition (not unique!) for its Tate module:
\begin{equation}\label{eq:decompositonTatemodule}
(T_H, \langle,\rangle_{T_H}) \cong (T_0, p \langle, \rangle_{T_0}) \oplus (T_1, \langle, \rangle_{T_1}),
\end{equation}
such that $T_0$ is of rank $2\sigma_0$, $\langle, \rangle_{T_0}$ and $\langle, \rangle_{T_1}$ are both perfect, since the cokernel of $T_H \hookrightarrow T_H^{\vee}$ is killed by $p$. The kernel 
\[
\overline{H} \coloneqq \ker (T_H \otimes_{\mathbb{Z}_p} k \to H\otimes_W k)
\]
forms a $\sigma_0$-dimensional $\mathbb{F}_p$-vector space which is totally isotropic with respect ot $\langle,\rangle_{T_H}$ and it is isomorphic to the image of
\[
H \cong H^{\vee} \to T_H^{\vee}\otimes_{\mathbb{Z}_p} W
\]
It also forms a \emph{strictly characteristic subspace} of $T_0 \otimes_{\mathbb{Z}_p} k$ (\cf\cite[Definition 3.19]{Og79} for the definition and {\loccit~Remark 3.16} for the explanation). Let $\mathbb{K}_H = \varphi^{-1}(\overline{H}) \subset T_0 \otimes_{\mathbb{Z}_p} k$, which is another strictly characteristic subspace of $T_0\otimes_{\mathbb{Z}_p} k$. We also call it the characteristic space of the $K3$-crystal $H$. 

The classification of $K3$-crystals (see \cite[Theorem 3.20]{Og79}) asserts that
\begin{thm}[Ogus]\label{thm:characteristicspace}
For two supersingular $K3$-crystals $H$ and $H'$ of the same rank, $H \cong H'$ if and only if there is an isomorphism of pairs $(T_0 \otimes k, \mathbb{K}_H) \cong (T_0' \otimes k, \mathbb{K}_{H'})$.
\end{thm}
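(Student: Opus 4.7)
The plan is to prove both directions of the equivalence; the forward implication is formal, while the converse is the substantive content and amounts to reconstructing a supersingular K3-crystal from the pair $(T_0 \otimes k, \mathbb{K}_H)$.

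For the forward direction, any isomorphism $\psi\colon H \xrightarrow{\sim} H'$ commutes with Frobenius and preserves the bilinear form, so it restricts to an isometry $T_H \xrightarrow{\sim} T_{H'}$ of Tate modules. The orthogonal decomposition \eqref{eq:decompositonTatemodule} is intrinsic up to modification of the unimodular summand, since $T_0$ is characterized as the summand on which the pairing lands in $p\mathbb{Z}_p$. Functoriality of $\overline{H} = \ker(T_H \otimes k \to H \otimes_W k)$ then gives $\psi(\overline{H}) = \overline{H'}$, and compatibility with $\varphi$ yields $\psi(\mathbb{K}_H) = \mathbb{K}_{H'}$.

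For the converse, the first step is to reconstruct the K3-crystal $H$ intrinsically from the pair $(T_H, \overline{H})$. Since $H$ is supersingular (purely of slope $1$), the natural map $T_H \otimes_{\mathbb{Z}_p} W \to H$ is injective with finite-length cokernel, and the perfect pairing on $H$ realizes $H$ as a $W$-lattice squeezed between $T_H \otimes W$ and $T_H^\vee \otimes W$. A discriminant count shows that $H/(T_H \otimes W)$ is a $k$-vector space of dimension $\sigma_0$, canonically identified with $\overline{H}$, so $H$ is the unique $W$-lattice between $T_H \otimes W$ and $T_H^\vee \otimes W$ whose image in $(T_H^\vee/T_H) \otimes_{\mathbb{F}_p} k$ coincides with $\overline{H}$. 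The Frobenius on $H$ is then forced by $\varphi|_{T_H} = p\cdot\id$, $\sigma$-semilinearity, and the stability condition $\varphi(H) \subset H$. Thus $(T_H, \overline{H})$, and equivalently $(T_H, \mathbb{K}_H)$, determines $H$ up to isomorphism.

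It remains to lift the given isomorphism $\alpha\colon (T_0 \otimes k, \mathbb{K}_H) \xrightarrow{\sim} (T_0' \otimes k, \mathbb{K}_{H'})$ to an isometry $f\colon T_H \xrightarrow{\sim} T_{H'}$ of $\mathbb{Z}_p$-lattices inducing $\alpha$. The classification of non-degenerate $\mathbb{Z}_p$-quadratic lattices yields $T_0 \cong T_0'$ (as rank-$2\sigma_0$ lattices with pairing in $p\mathbb{Z}_p$) and $T_1 \cong T_1'$ (unimodular, by Witt cancellation), so the problem reduces to arranging the isomorphism on $T_0$ compatibly with $\alpha$. This is the technical heart of the argument: one must establish a Witt-type extension showing that an isometry of $(T_0 \otimes k, \mathbb{K}_H)$ preserving the strictly characteristic subspace structure lifts to an isometry of $T_0$ itself. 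Once $f$ is produced, tensoring by $W$ and applying the reconstruction above yields the desired isomorphism $H \xrightarrow{\sim} H'$ of K3-crystals. The \textbf{main obstacle} is precisely this lifting step, which rests on an orbit-theoretic analysis of strictly characteristic subspaces under the orthogonal group of $T_0$.
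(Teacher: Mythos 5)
The paper does not actually prove this statement: it is quoted directly from Ogus (\cite[Theorem 3.20]{Og79}) and used as a black box, so the only meaningful comparison is with Ogus's original argument. Your outline does follow the shape of that argument: the forward direction by functoriality of $T_H$ and $\overline{H}$, and the converse by reconstructing $H$ as the unique $W$-lattice $T_H\otimes W\subset H\subset T_H^{\vee}\otimes W$ cut out by the characteristic subspace, with $\varphi$ forced by $\varphi|_{T_H}=p\cdot\mathrm{id}$, semilinearity, and stability. That reconstruction step is correct in spirit and is indeed how Ogus reduces the classification to a lattice-theoretic problem.

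However, two genuine gaps remain, one of which you explicitly flag as the ``main obstacle'' without closing it. First, $T_1\cong T_1'$ does not follow from Witt cancellation alone: unimodular quadratic $\mathbb{Z}_p$-lattices of equal rank are distinguished by their discriminant in $\mathbb{Z}_p^*/(\mathbb{Z}_p^*)^2$, so you need to know that the discriminant (equivalently the Hasse invariant) of the Tate module is pinned down by the existence of the crystal structure. This is a separate result of Ogus (a supersingular K3 lattice is determined up to isometry by its rank and $\sigma_0$, via non-neutrality of the form), and without it an isomorphism of the pairs $(T_0\otimes k,\mathbb{K})$ does not yet yield an isometry of the full Tate modules. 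Second, the lifting of an isometry of $(T_0\otimes k,\mathbb{K}_H)$ to an isometry of $T_0$ over $\mathbb{Z}_p$ is precisely the content of Ogus's theorem and is asserted rather than proved: one must check that the given $k$-linear isometry can be arranged to be compatible with the $\sigma$-semilinear structure (hence defined over $\mathbb{F}_p$), then lift mod-$p$ isometries to $\mathbb{Z}_p$ using smoothness of the orthogonal group for $p$ odd, and this rests on Ogus's analysis of the action of $O(T_0)$ on strictly characteristic subspaces. As written, the proposal is an accurate road map of the proof rather than a proof; the two lattice-theoretic inputs above must be supplied (or cited) for it to be complete.
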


\subsection{Supersingular derived Torelli theorem} \label{subsec:supersingularderivedtorelli}
Let $\cP$ be the Poincar\'e line bundle on $A \times \widehat{A}$ associated to the polarization $\varphi_{\cL}$. There is a canonical isomorphism between Dieudonn\'e modules:
\begin{equation}\label{eq:dieudual}
\Phi_A \colon \rH^1_{\crys}(A/W)^* \xrightarrow{} \rH^1_{\crys}(\widehat{A}/W),
\end{equation}
which corresponds to the first crystalline Chern class
\[
c_1(\cP) \in \rH^1_{\crys}(A/W) \otimes_W \rH^1_{\crys}(\widehat{A}/W) \subseteq \rH^2_{\crys}(A \times \widehat{A}/W);
\]
see \cite[Th\'eor\`em 5.1.2]{BBM2006}. We have a natural quadratic form on the $F$-crystal $\rH^1_{\crys}(A \times \widehat{A})$:
\[
q_A(a, \alpha) = 2 \Phi_A^{-1}(\alpha)(a) \quad \text{for } (a,\alpha) \in \rH^1_{\crys}(A/W) \oplus \rH^1_{\crys}(\widehat{A}/W)
\]
With abelian crystals, we are able to provide a cohomological description of Theorem \ref{thm:OrlovPolischuk} for supersingular abelian varieties.

It is clear that any symplectic isomorphism $f \colon A_1\times \widehat{A}_1 \xrightarrow{\sim} A_2 \times \widehat{A}_2$ will induce an isomorphism of abelian crystals
\[
\varphi \colon \rH^1_{\crys}(A_1/W) \oplus \rH^1_{\crys}(\widehat{A}_1/W) \xrightarrow{\sim}\rH^1_{\crys}(A_2/W) \oplus  \rH^1_{\crys}(\widehat{A}_2/W),
\]
by taking K\"unneth decomposition.

Conversely, we can write the isomorphism $\varphi$ into a $2 \times 2$-matrix 
\[
\rH^1_{\crys}(A_1/W) \oplus \rH^1_{\crys}(\widehat{A}_1/W) \xrightarrow{
\begin{pmatrix}
\varphi_1 & \varphi_2 \\
\varphi_3 & \varphi_4
\end{pmatrix}}\rH^1_{\crys}(A_2/W) \oplus  \rH^1_{\crys}(\widehat{A}_2/W)
\]
in which $\varphi_i$ are all morphisms between $F$-crystals. Then $\varphi$ being isometry in terms of the quadratic form $q_A$ is equivalent to satisfying matrix equation
\[
\begin{pmatrix}
\varphi_1^t & \varphi_3^t \\
\varphi_2^t & \varphi_4^t
\end{pmatrix}
\begin{pmatrix}
0 & 1 \\
1 & 0
\end{pmatrix}
\begin{pmatrix}
\varphi_1 & \varphi_2 \\
\varphi_3 & \varphi_4
\end{pmatrix}
= \begin{pmatrix}
0 & 1 \\
1 & 0
\end{pmatrix}.
\]
It implies that
\[
\varphi^{-1}= \begin{pmatrix}
0 & 1 \\
1 & 0
\end{pmatrix}
\begin{pmatrix}
\varphi_1^t & \varphi_3^t \\
\varphi_2^t & \varphi_4^t
\end{pmatrix}
\begin{pmatrix}
0 & 1 \\
1 & 0
\end{pmatrix}=
\begin{pmatrix}
\varphi_4^t & \varphi_2^t \\
\varphi_3^t & \varphi_1^t
\end{pmatrix}
\]
Now suppose $A_1$ or $A_2$ is supersingular. By Ogus's crystalline Torelli theorem for supersinsgular abelian varieties (cf.\ \cite[Theorem 6.2]{Og79}), we can find an isomorphism $f \in \Hom(A_1\times \widehat{A}_1, A_2\times \widehat{A}_2)$ such that $\rH^1_{\crys}(f) = \varphi$ . It also satisfies $H^1_{\crys}(f_i)= \varphi_i$ if we rewrite $f$ into the following matrix form uniquely:
\[
f=\begin{pmatrix}
f_1 & f_2 \\
f_3 & f_4
\end{pmatrix}.
\]
Note that
\[
c_1(\widehat{\cP}) = - c_1(\cP) \in \rH^1_{\crys}(\widehat{A}/W) \otimes \rH^1_{\crys}(A/W)
\]
by the identification $\rH^1_{\crys}(\widehat{\widehat{A}}/W) \cong \rH^1_{\crys}(A/W)$. Thus we have
\[
\varphi_1^t = \rH^1_{\crys}(\widehat{f}_1) \quad \varphi_2^t = - \rH^1_{\crys}(\widehat{f}_2) \quad \varphi_3^t = - \rH^1_{\crys}(\widehat{f}_3) \quad \varphi_4^t = \rH^1_{\crys}(\widehat{f}_4).
\]
Thus $f \in U(A_1,A_2)$. Therefore, we have the following
\begin{prop}
For arbitrary supersingular abelian varieties $A_1$ and $A_2$ over $k$, the following statements are equivalent.
\begin{enumerate}
    \item $\D^b(A_1) \simeq \D^b(A_2)$.
    \item There is a isometry of abelian crystals
    \[
    \rH^1_{\crys}(A_1 \times \widehat{A}_1/W) \xrightarrow{\sim} \rH^1_{\crys}(A_2 \times \widehat{A}_2/W).
    \]
    with respect to $q_{A_i}$.
\end{enumerate}
\end{prop}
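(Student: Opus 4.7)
The plan is to identify derived equivalences of supersingular abelian varieties with isometries of their associated abelian crystals by composing two bridges: Orlov--Polishchuk's derived Torelli theorem (Theorem~\ref{thm:OrlovPolischuk}), which links $\text{Eq}(\D^b(A_1),\D^b(A_2))$ to symplectic isomorphisms in $U(A_1,A_2)$, and Ogus's crystalline Torelli theorem for supersingular abelian varieties (\cite[Theorem 6.2]{Og79}), which lifts morphisms of abelian crystals to morphisms of abelian varieties. The quadratic form $q_A$ already built in the excerpt is precisely the cohomological shadow of the symplectic form on $A \times \widehat{A}$ defined via the Poincar\'e class.

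For the direction (1) $\Rightarrow$ (2), I would take a symplectic isomorphism $f \in U(A_1,A_2)$ produced by $\gamma_{A_1,A_2}$ and set $\varphi = \rH^1_{\crys}(f)$; via the K\"unneth decomposition this is an isomorphism of abelian crystals. Writing $f$ and $\widetilde f$ as $2\times 2$ matrices and recalling that $\Phi_{A}$ corresponds to $c_1(\cP)$ while $c_1(\widehat{\cP}) = -c_1(\cP)$ under $\rH^1_{\crys}(\widehat{\widehat{A}}/W) \cong \rH^1_{\crys}(A/W)$, the relation $f^{-1} = \widetilde f$ translates directly into the matrix identity
\[
\begin{pmatrix} \varphi_1^t & \varphi_3^t \\ \varphi_2^t & \varphi_4^t \end{pmatrix}
\begin{pmatrix} 0 & 1 \\ 1 & 0 \end{pmatrix}
\begin{pmatrix} \varphi_1 & \varphi_2 \\ \varphi_3 & \varphi_4 \end{pmatrix}
= \begin{pmatrix} 0 & 1 \\ 1 & 0 \end{pmatrix},
\]
which is exactly the condition that $\varphi$ be an isometry with respect to $q_{A_i}$.

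For (2) $\Rightarrow$ (1), I would begin with an isometry $\varphi$ of the abelian crystals of $A_i \times \widehat{A}_i$. Since both $A_1$ and $A_2$ are supersingular, so are their products with dual varieties, and Ogus's crystalline Torelli theorem produces a unique isomorphism $f \colon A_1 \times \widehat{A}_1 \xrightarrow{\sim} A_2 \times \widehat{A}_2$ with $\rH^1_{\crys}(f) = \varphi$. The task is then to show $f \in U(A_1,A_2)$. Writing $f$ in matrix form $(f_i)$ and using once more the sign identity $c_1(\widehat{\cP}) = - c_1(\cP)$, one obtains
\[
\varphi_1^t = \rH^1_{\crys}(\widehat{f}_1), \quad \varphi_2^t = -\rH^1_{\crys}(\widehat{f}_2), \quad \varphi_3^t = -\rH^1_{\crys}(\widehat{f}_3), \quad \varphi_4^t = \rH^1_{\crys}(\widehat{f}_4),
\]
so the isometry matrix equation above is exactly the identity $\widetilde f \circ f = \mathrm{id}$ at the level of $\rH^1_{\crys}$; by Ogus again this lifts to $\widetilde f \circ f = \mathrm{id}$ on the varieties themselves. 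Hence $f \in U(A_1,A_2)$, and Theorem~\ref{thm:OrlovPolischuk} yields the desired derived equivalence.

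The main obstacle is purely bookkeeping: one must track the signs coming from $c_1(\widehat{\cP}) = -c_1(\cP)$ and the biduality $\widehat{\widehat{A}} \cong A$ to match the symplectic matrix relation on the geometric side with the quadratic-form relation on the crystalline side. The supersingularity hypothesis is used only in the (2) $\Rightarrow$ (1) direction, where Ogus's crystalline Torelli provides algebraicity of morphisms of abelian crystals; the (1) $\Rightarrow$ (2) direction is formal and holds for arbitrary abelian varieties.
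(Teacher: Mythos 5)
Your proposal matches the paper's proof essentially step for step: both directions run through Orlov--Polishchuk's map $\gamma_{A_1,A_2}$ to $U(A_1,A_2)$ on one side and Ogus's crystalline Torelli theorem on the other, with the same matrix/sign bookkeeping via $c_1(\widehat{\cP})=-c_1(\cP)$ identifying the symplectic condition $f^{-1}=\widetilde{f}$ with the isometry condition for $q_{A_i}$. Your added remarks (that faithfulness of $\rH^1_{\crys}$ is what descends $\widetilde{f}\circ f=\id$ to the varieties, and that supersingularity is only needed for (2) $\Rightarrow$ (1)) are correct and consistent with the paper.
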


For supersingular abelian surfaces, we have the following consequence via K3-crystals.
\begin{thm}\label{thm:supersingularDerivedTorelli}
Let $A_1$ and $A_2$ be two supersingular abelian surfaces over $k$. Then
$\D^b(A_1)\cong \D^b(A_2)$  if and only if 
$A_1 \cong A_2$.
\end{thm}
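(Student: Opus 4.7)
The backward direction is trivial. For the forward direction, my plan is as follows. First, I would apply the preceding Proposition to the exact equivalence $\D^b(A_1) \simeq \D^b(A_2)$, obtaining an isometry of abelian crystals
\[\varphi \colon \rH^1_{\crys}(A_1 \times \widehat{A}_1/W) \xrightarrow{\sim} \rH^1_{\crys}(A_2 \times \widehat{A}_2/W)\]
respecting the quadratic forms $q_{A_i}$. The strategy is to upgrade this to an isomorphism of abelian crystals $\rH^1_{\crys}(A_1/W) \cong \rH^1_{\crys}(A_2/W)$, from which Ogus's supersingular crystalline Torelli theorem for abelian varieties (\cite[Theorem 6.2]{Og79}) will immediately yield $A_1 \cong A_2$.

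The next step would exploit supersingularity: by Remark \ref{rmk:pp}, each $A_i$ is principally polarized, so $\widehat{A}_i \cong A_i$ and hence $\rH^1_{\crys}(\widehat{A}_i/W) \cong \rH^1_{\crys}(A_i/W)$. Setting $H_i \coloneqq \rH^1_{\crys}(A_i/W)$, the isometry $\varphi$ becomes an isomorphism $H_1^{\oplus 2} \xrightarrow{\sim} H_2^{\oplus 2}$ in the category of supersingular abelian crystals of genus $4$. Thus the theorem reduces to a cancellation statement in this category: from $H_1^{\oplus 2} \cong H_2^{\oplus 2}$ deduce $H_1 \cong H_2$. This cancellation will be the main obstacle.

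To settle it, I would pass to supersingular K3 crystals via Ogus's equivalence $\Lambda^2$ of \cite[Proposition 6.9]{Og79} between supersingular abelian crystals of genus $2$ and supersingular K3 crystals of rank $6$; under this equivalence $H_i$ corresponds to $\rH^2_{\crys}(A_i/W)$. Decomposing
\[\Lambda^2(H_i^{\oplus 2}) \cong \rH^2_{\crys}(A_i/W)^{\oplus 2} \oplus (H_i \otimes H_i),\]
the two K3-crystal summands are intrinsically distinguished from the middle piece $H_i \otimes H_i$ by the K3-crystal axioms (weight $2$ and Hodge number $h^0 = 1$ per summand), so any induced isometry must match them and produce an isomorphism $\rH^2_{\crys}(A_1/W)^{\oplus 2} \cong \rH^2_{\crys}(A_2/W)^{\oplus 2}$ of supersingular K3 crystals. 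Finally, by Theorem \ref{thm:characteristicspace} the iso class of a supersingular K3 crystal is determined by its characteristic subspace in $T_0 \otimes k$, and the characteristic subspace of a direct sum is the orthogonal direct sum of the factor subspaces; a Krull--Schmidt type argument then forces $\bK_{\rH^2(A_1)} \cong \bK_{\rH^2(A_2)}$, hence $\rH^2_{\crys}(A_1/W) \cong \rH^2_{\crys}(A_2/W)$, and running $\Lambda^2$ in reverse supplies $H_1 \cong H_2$, completing the plan.
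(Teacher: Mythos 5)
The backward direction and the first step (invoking the preceding Proposition to get an isometry $\varphi\colon \rH^1_{\crys}(A_1\times\widehat{A}_1/W)\xrightarrow{\sim}\rH^1_{\crys}(A_2\times\widehat{A}_2/W)$) are fine, but your cancellation step has a genuine gap, and in setting it up you also discard the quadratic form $q_{A_i}$, which is precisely the extra datum that makes the statement true. Concretely: (1) an isomorphism of $F$-crystals $\Lambda^2(H_1^{\oplus 2})\cong\Lambda^2(H_2^{\oplus 2})$ has no reason to respect the decomposition $(\Lambda^2 H_i)^{\oplus 2}\oplus(H_i\otimes H_i)$. The criteria you offer --- weight $2$ and $h^0=1$ ``per summand'' --- do not single out the summands: the whole of $\Lambda^2(H_i^{\oplus 2})$ is of weight $2$, Hodge numbers are additive rather than attached to intrinsic sub-crystals, and the decomposition itself depends on the chosen splitting of $H_i^{\oplus 2}$, so it is not canonical. (2) Ogus's equivalence $\Lambda^2$ in \cite[Proposition 6.9]{Og79} is only between supersingular abelian crystals of genus $2$ and supersingular K3 crystals of rank $6$; applying it to the genus-$4$ crystal $H_i^{\oplus 2}$ (whose $\Lambda^2$ has rank $28$) is outside its scope, so you cannot ``run $\Lambda^2$ in reverse'' there. (3) The final ``Krull--Schmidt type argument'' for characteristic subspaces is asserted, not proved: Theorem \ref{thm:characteristicspace} classifies a single supersingular K3 crystal by the pair $(T_0\otimes k,\mathbb{K}_H)$, and no cancellation statement for such pairs (or for strictly characteristic subspaces under orthogonal sums) is available in the paper or in the cited sources. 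Since the entire forward implication rests on this cancellation, the argument is incomplete.

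For comparison, the paper avoids cancellation altogether by working with $\rH^2$ and the Mukai pairing: the derived equivalence induces an isometry of the Mukai K3-crystals $\widetilde{\rH}(A_i/W)=W(-1)\oplus\rH^2_{\crys}(A_i/W)\oplus W(-1)$, and the key observation is that the added hyperbolic summand $\ZZ_p^{\oplus 2}$ of the Tate module lands in the unimodular part $T_1$ of the decomposition \eqref{eq:decompositonTatemodule}, so the classifying invariant $(T_0,\mathbb{K})$ of Theorem \ref{thm:characteristicspace} is literally unchanged from that of $\rH^2_{\crys}(A_i/W)$. One then gets $\rH^2_{\crys}(A_1/W)\cong\rH^2_{\crys}(A_2/W)$ directly and concludes by the crystalline Torelli theorem. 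If you want to salvage your route, you would need either to keep track of the symplectic form $q_{A_i}$ throughout (so that the isomorphism $H_1^{\oplus 2}\cong H_2^{\oplus 2}$ is an isometry for a hyperbolic form, which is much closer to the paper's Mukai-crystal argument), or to supply an actual proof of cancellation for supersingular abelian crystals, which is not obvious.
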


\begin{proof}

It is clear that when $\dim A_1 = 2$ then $\dim A_2 =2$ if they are derived equivalent. The given derived equivalence $\Phi^P$ induces an isometry between K3 crystals $\widetilde{\rH}(A_1/W) \cong \widetilde{\rH}(A_2/W)$, where
\[
\widetilde{\rH}(A_i/W) \coloneqq W(-1) \oplus \rH^2_{\crys}(A_i/W) \oplus W(-1)
\]
is the Mukai K3-crystal of $A_i$ equipped with the Mukai pairing.
Since $\rH^2_{\crys}(A_i/W)^{\varphi_i =p} \cong \NS(A_i) \otimes \ZZ_p$ (\cf \cite[(1.6)]{Og79}), we have
\[
\widetilde{\rH}(A_i/W)^{\varphi_i =p} \cong \mathbb{Z}_p^{\oplus 2} \oplus \NS(A_i) \otimes \ZZ_p.
\]
Therefore, the characteristic subspace of $\widetilde{\rH}(A_i/W)$ is equal to
\[
\ker \left( \NS(A_i) \otimes k \to \rH^2_{\dR}(A_i/k) \right),
\]
which is isomorphic to the characteristic subspace of $\rH^2_{\crys}(A_i/W)$. Let 
\[
\NS(A_i) \otimes \mathbb{Z}_p \cong \left(T_0^{(i)}, p\langle, \rangle \right)\oplus \left(T_1^{(i)}, \langle, \rangle\right)
\]
be decomposition of $\mathbb{Z}_p$-lattice as in \eqref{eq:decompositonTatemodule}. Then there is a decomposition of the Tate module of $\widetilde{\rH}(A_i/W)$:
\[
\left(T_0^{(i)},p \langle, \rangle\right) \oplus \left(T_1^{(i)} \oplus \mathbb{Z}_p^{\oplus 2},\langle,\rangle \right),
\]
where the second inner product is from the restriction of Mukai pairing.
Therefore 
\[
(T_0^{(1)}, \mathbb{K}_{A_1}) \cong (T_0^{(2)}, \mathbb{K}_{A_2}),
\]
which implies an isomorphism between K3-crystals:
\[
\rH^2_{\crys}(A_1/W) \cong \rH^2_{\crys}(A_2/W).
\]
Then the isomorphism $A_1 \cong A_2$ follows from the crystalline Torelli theorem.
\end{proof}

Now we can give a summary of the known equivalence relations between supersingular abelian surfaces.
\begin{cor}\label{cor3}
Let $A_1, A_2$ be two abelian surfaces. If $A_1$ is supersingular,  then the following statements are equivalent:
\begin{TFAE}[leftmargin=1.5pc]
    \item There is an isomorphism $A_1 \cong A_2$;
    \item There is an isomorphism between K3-crystals $\rH^2_{\crys}(A_1/W) \cong \rH^2_{\crys}(A_2/W)$;
    \item There is an isomorphism between Kummer surfaces $\km(A_1) \cong \km(A_2)$;
    \item There is a derived equivalence $\D^b(\km(A_1)) \xrightarrow{\sim} \D^b(\km(A_2))$;
    \item There is a derived equivalence $\D^b(A_1) \xrightarrow{\sim} \D^b(A_2)$.
\end{TFAE}
\end{cor}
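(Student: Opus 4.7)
The plan is to close a cycle among the five statements by invoking previously established results and adding one crystalline step on the Kummer side. The implications (a) $\Leftrightarrow$ (b) follow from Ogus's crystalline Torelli for supersingular abelian surfaces combined with the equivalence via $\Lambda^2$ between supersingular abelian crystals of genus $2$ and supersingular K3-crystals of rank $6$ (recalled just before Remark~\ref{rmk:pp}); (a) $\Leftrightarrow$ (e) is Theorem~\ref{thm:supersingularDerivedTorelli}; the implications (a) $\Rightarrow$ (c) and (c) $\Rightarrow$ (d) are tautological; and (d) $\Rightarrow$ (c) is \cite[Theorem 1.1]{LO15}, asserting that a Kummer K3 surface admits no non-trivial Fourier-Mukai partners.

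To close the cycle, I would prove (c) $\Rightarrow$ (b). Given $\km(A_1) \cong \km(A_2)$, since $A_1$ is supersingular both Kummer surfaces are supersingular K3 surfaces, and the isomorphism produces an isometry of K3-crystals $\rH^2_{\crys}(\km(A_1)/W) \cong \rH^2_{\crys}(\km(A_2)/W)$. I would then adapt the characteristic-subspace argument already deployed in the proof of Theorem~\ref{thm:supersingularDerivedTorelli}. The Kummer construction in characteristic $\neq 2$ presents the sixteen exceptional $(-2)$-curves $E_a$ indexed by $a \in A[2]$ as generators of a rank-$16$ Kummer sublattice of $\NS(\km(A))$, whose orthogonal complement inside $\NS(\km(A))$ is canonically identified, via pullback along the rational double cover $A \dashrightarrow \km(A)$, with $\NS(A)$ rescaled by $2$. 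Tensoring with $\ZZ_p$ and applying the decomposition \eqref{eq:decompositonTatemodule}, the Kummer part of the Tate module of $\rH^2_{\crys}(\km(A)/W)$ is spanned by algebraic classes and therefore contributes nothing to the characteristic subspace, while the remaining summand recovers the pair $(T_0, \mathbb{K}_A)$ associated to the K3-crystal $\rH^2_{\crys}(A/W)$.

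Applying Ogus's classification of supersingular K3-crystals by characteristic subspaces (Theorem~\ref{thm:characteristicspace}), the isomorphism of Kummer surfaces descends to an isomorphism $(T_0^{(1)}, \mathbb{K}_{A_1}) \cong (T_0^{(2)}, \mathbb{K}_{A_2})$ at the abelian level, hence to an isomorphism of K3-crystals $\rH^2_{\crys}(A_1/W) \cong \rH^2_{\crys}(A_2/W)$; this is statement (b), closing the cycle. The main obstacle to verify in detail is the precise crystalline analogue of the Nikulin--Shioda lattice decomposition of $\NS(\km(A))$ in positive characteristic $p \neq 2$, in particular that the orthogonal complement of the Kummer sublattice matches $\NS(A)(2)$ as an $F$-crystal summand and that the characteristic subspaces align accordingly. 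Since $p \neq 2$ provides the $2$-divisibility required by the Kummer construction and all exceptional divisors are algebraic, this identification should extend the classical complex-analytic statement to our setting.
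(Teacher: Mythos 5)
Your proposal is correct, and for the one genuinely nontrivial implication it takes a different --- and more self-contained --- route than the paper. The paper disposes of (a)$\Leftrightarrow$(b) by Ogus's crystalline Torelli theorem and of (a)$\Leftrightarrow$(e) by Theorem~\ref{thm:supersingularDerivedTorelli}, exactly as you do, but then simply declares (c), (d), (e) equivalent ``by Theorem~\ref{mainthm1}''. The directions (e)$\Rightarrow$(d)$\Rightarrow$(c) extracted from that theorem are non-circular (Proposition~\ref{prop:stellari1} together with \cite{LO15}), but the converse direction of Theorem~\ref{mainthm1}(iii) is, for supersingular surfaces, precisely what the paper deduces \emph{from} this corollary in the sentence that follows it; so, at least as written, the paper leaves the implication out of (c) unestablished in the supersingular case. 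Your step (c)$\Rightarrow$(b) closes the cycle directly on the crystalline side, by the same mechanism ($\Lambda^2$, Tate modules, characteristic subspaces) that the paper itself deploys in Theorem~\ref{thm:supersingularDerivedTorelli} and Lemma~\ref{lemma:abelian-Kummer}, and it is essentially Ogus's original comparison of the K3 crystal of $\km(A)$ with that of $A$. The point you flag as needing verification does go through: the Kummer lattice has $2$-power discriminant, so after $\otimes\,\ZZ_p$ with $p\neq 2$ it is unimodular and lands in the perfect summand $T_1$ of the decomposition \eqref{eq:decompositonTatemodule}, while the $2$-power index of the sublattice generated by the Kummer lattice and $\NS(A)(2)$ inside $\NS(\km(A))$ becomes invisible; hence $T_0$ and the characteristic subspace of $\rH^2_{\crys}(\km(A)/W)$ agree with those of $\rH^2_{\crys}(A/W)$, and Theorem~\ref{thm:characteristicspace}, applied once to each Kummer surface and once to the rank-$6$ abelian K3 crystals, yields (b). What your route buys is a proof of the corollary that presupposes no part of Theorem~\ref{mainthm1}(iii) and repairs the apparent circularity; what the paper's terser route buys is brevity.
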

\begin{proof}
We firstly note that the supersingularity is invariant under the relations listed in the statements. Hence $A_2$ is supersingular in each statement.

Then the equivalence between (a), (b) is just the Ogus's crysalline Torelli theorem for supersingular abelian surfaces. The statements (c), (d) and (e) are equivalent by the Theorem \ref{mainthm1}. The (a) and (e) are equivalent by Theorem \ref{thm:supersingularDerivedTorelli}.
\end{proof}

Therefore, the statement (iii) in Theorem \ref{mainthm1} is true for supersingular abelian surfaces.
\begin{rmk}
We suspect whether every supersingular abelian variety  $A$ has only trivial (in the strong sense) Fourier-Mukai partners, i.e. $\mathrm{FM}(A)=\{A\}$. This requires that  $A$ admits a principal polarization, which holds if $\dim A=2$ (\cf Remark \ref{rmk:pp}). But it may fail in  higher dimensional case. See \cite[ \S 10]{LOo98} for the discussion of non-principal polarizations on supersingular abelian varieties of any genus. It will be very interesting to know if  $\mathrm{FM}(A)=\{A, \widehat{A} \}$. 
\end{rmk}

\subsection{Proof of Theorem \ref{thm:supersingulargeneralizedKummer}}
 
We can see there is a quasi-liftably birational map 
\begin{equation}\label{eq:birationalKumtype}
K_{v}(A)\dashrightarrow K_{n}(A'),
\end{equation}
with $n=\frac{v^{2}}{2}-1$, $p \nmid n$ and some (supersingular) abelian surface $A'$  derived equivalent to $A$ (\cf\cite[Theorem 6.12]{FL18}). Then (i) follows from the fact $A$ does not have non-trivial Fourier-Mukai partners (\cf\ref{cor3}). 

To prove (ii), our strategy is to endow $\rH^2_{\crys}(K_n(A)/W)$ with a natural\footnote{Here the ``natural" means the structure should be at least functorial with respect to algebraic correpsondences.} K3-crystal structure, which is closely related to that of $A$.  Combining the algebraic correspondence given by the birational map \eqref{eq:birationalKumtype}, one can show that there is an isomorphism between K3-crystals of $A$ and $A'$. In fact,
\begin{lemma}\label{lemma:BBforms}
Assume that $p \nmid n+1$.
\begin{enumerate}
    \item For any abelian surface $A$ and positive integer $n$, we can endow the $F$-crystal $\rH^2_{\crys}(K_n(A)/W)$ with the Beauville-Bogomolov form $q$, which makes $(\rH^2_{\crys}(K_n(A)/W),q)$ a K3 crystal of rank 7.
    \item There is an orthogonal decomposition with respect to $q$:
    \begin{equation}\label{eq:FcrystalDecom}
    \rH^2_{\crys}(K_n(A)/W) \cong \rH^2_{\crys}(A/W) \oplus W(-1).
    \end{equation}
\end{enumerate}

\end{lemma}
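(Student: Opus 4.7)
The strategy is to lift everything to characteristic zero, borrow Beauville's decomposition on the generic fiber, and then descend the pieces integrally to the crystalline side. Every abelian surface over $k$ lifts to an abelian scheme $\cA/W$; the relative Hilbert scheme $\cA^{[n+1]}/W$ is smooth and proper, the summation map $\sigma\colon \cA^{[n+1]}\to \cA$ is smooth of relative dimension $2n$, and its fiber over the zero section is a smooth proper $W$-lift $K_n(\cA)/W$ of $K_n(A)$. Crystalline--de Rham comparison then gives
\[
\rH^2_{\crys}(K_n(A)/W)\simeq \rH^2_{\dR}(K_n(\cA)/W)
\]
with its Hodge filtration and Frobenius. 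The geometric generic fiber $K_n(\cA_{\overline{\eta}})$ is a classical irreducible symplectic variety, so $\rH^2$ carries the Beauville--Bogomolov form $q$, characterized up to scale by a Fujiki relation of the form $\int\alpha^{2n} = c\cdot q(\alpha)^n$, and Beauville's classical theorem supplies the orthogonal decomposition $\rH^2(K_n(\cA_{\overline{\eta}}),\Z)\cong \rH^2(\cA_{\overline{\eta}},\Z)\oplus \Z\!\cdot\! e$ with $q(e)=-2(n+1)$, where $2e=[E]$ is the class of the exceptional divisor of the Hilbert--Chow morphism.

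Next I would build both summands over $W$ directly. The class $e:=\tfrac{1}{2}[E]\in \rH^2_{\crys}(K_n(A)/W)$ makes sense because $E$ is $2$-divisible in integral cohomology (a feature of the Hilbert--Chow resolution that persists under smooth specialization of the lift) and $p\neq 2$ inverts $2$ in $W$. The embedding $\mu\colon \rH^2_{\crys}(A/W)\hookrightarrow \rH^2_{\crys}(K_n(A)/W)$ is given by the algebraic correspondence $\alpha\mapsto p_{2,*}(p_1^*\alpha\cup[\Xi])$, where $\Xi\subset \cA\times \cA^{[n+1]}$ is the universal subscheme restricted to $\cA\times K_n(\cA)$; this is the crystalline incarnation of Beauville's symmetrization map. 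The K3-crystal axioms for $q$ on the resulting direct sum then follow: weight $2$ and the Frobenius scaling $\langle \varphi x,\varphi y\rangle = p^2\sigma\langle x,y\rangle$ come from the corresponding properties for $\rH^2$ of the smooth proper symplectic lift; $h^0=1$ from $\rH^0(\Omega^2_{K_n(\cA)/W})$ being spanned by the holomorphic symplectic form; and perfectness because the cup-product is already perfect on $\rH^2_{\crys}(A/W)$, while the rank-one complement $We$ has discriminant $q(e)=-2(n+1)$, which is a unit in $W$ under the hypotheses $p\neq 2$ and $p\nmid n+1$. Since $e$ is a divisor class, Frobenius acts on it by $p^2$, so this complement is isomorphic to $W(-1)$.

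The main obstacle is integrality, not rationality, of the decomposition in (2). Rationally the characteristic-zero statement transports through the comparison isomorphisms at once. Integrally one needs two things: that $e$ spans a rank-one $W$-direct summand, which is precisely the condition that $q(e)=-2(n+1)$ be invertible, i.e.\ $p\nmid 2(n+1)$; and that $\mu(\rH^2_{\crys}(A/W))$ be primitively embedded, which follows from Beauville's integral decomposition on the generic fiber together with torsion-freeness of $\rH^2_{\crys}(K_n(A)/W)$ (supplied by the smooth proper lift and Hodge--de Rham degeneration). The arithmetic hypothesis $p\nmid n+1$ in the lemma is exactly what makes both summands sit as integral orthogonal direct factors of the K3 crystal.
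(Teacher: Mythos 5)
Your overall strategy coincides with the paper's: lift $A$ to an abelian scheme $\cA/W$, realize $K_n(\cA)$ as the fiber of the summation map over the zero section, import Beauville's orthogonal decomposition $\rH^2 \cong \rH^2(\cA)\oplus \Z\delta$ with $q(\delta)=-2(n+1)$ from the characteristic-zero fiber, and then check the K3-crystal axioms (perfectness from $p\nmid 2(n+1)$, the Frobenius scaling, $h^0=1$). The verification of the axioms at the end of your argument is essentially identical to the paper's.

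The genuine gap is in how you transport the \emph{integral} structure. You pass through the crystalline--de Rham comparison $\rH^2_{\crys}(K_n(A)/W)\simeq \rH^2_{\dR}(K_n(\cA)/W)$ and then invoke ``Beauville's integral decomposition on the generic fiber together with torsion-freeness.'' But Beauville's integral decomposition, and the integral normalization of the Beauville--Bogomolov form, live on the Betti (equivalently $p$-adic \'etale) lattice $\rH^2_{\et}(\cK_n(\cA)_{\bar K},\ZZ_p)$; the Berthelot--Ogus de Rham comparison says nothing about how that $\ZZ_p$-lattice sits inside the $W$-lattice $\rH^2_{\dR}(K_n(\cA)/W)$, and the Fujiki relation only pins down $q$ up to scale on rational cohomology. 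So neither the claim that $\mu$ is an isometric (let alone primitive) embedding for $q$, nor the claim that $\mu(\rH^2_{\crys}(A/W))\oplus We$ exhausts the crystalline lattice, actually follows from what you have written: you are silently identifying the \'etale integral structure with the crystalline one. This is exactly the point where the paper instead invokes the integral crystalline--\'etale comparison of Bhatt--Morrow--Scholze (\cite[Theorem 1]{BMS1}), which (in the torsion-free situation at hand) transports the $\ZZ_p$-lattice decomposition and the form to the $W$-lattice $\rH^2_{\crys}(K_n(A)/W)$. Your idea of constructing both summands directly by algebraic cycles ($\tfrac12[E]$ and the universal-subscheme correspondence) is a reasonable way to try to avoid this, since cycle classes do live compatibly in all cohomology theories, but then you would still owe a cycle-theoretic proof that these two orthogonal direct summands have full rank \emph{and} saturate the lattice; as written, that step is again deferred to the \'etale-side decomposition, so the comparison input cannot be avoided.
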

\begin{proof}
Let $\cA$ be a lifting of abelian surface $A$ to the Witt vector ring $W$, which is an abelian scheme over $W$ (\cf\cite[(11.1)]{oort85}). Consider the summation of points $s \colon \cA^{[n]}\to \cA $. The fiber of $s$ at the origin point $\spec(W) \to \cA$ is a lifting of generalized Kummer variety $K_n(A)$, which will be denoted by $\cK_n(\cA)$ as a $W$-scheme.
For the geometric generic fiber $\cK_n(\cA)_{\bar{K}}$, we have the Beauville-Bogomolov form $q_{\bar{K}}$ on $p$-adic \'etale cohomology $\rH^2_{\et}(\cK_n(\cA)_{\bar{K}},\ZZ_p)$. There is an orthogonal decomposition
\begin{equation}
    \rH^2_{\et}(\cK_n(\cA)_{\bar{K}}, \ZZ_p) \cong \rH^2_{\et}(\cA_{\bar{K}},\ZZ_p) \oplus \ZZ_p\delta,
\end{equation}
such that $q_{\bar{K}}(\delta) = -2(n+1)$.
The Beauville-Bogomolov form on $\rH^2_{\crys}(K_n(A)/W)$ can be defined by applying integral crystalline-\'etale comparison (\cf\cite[Theorem 1]{BMS1}) and denoted by $q$. It is followed by a decomposition of $F$-crystals
\[
\rH^2_{\crys}(K_n(A)/W) \cong \rH^2_{\crys}(A/W) \oplus W(-1),
\]
with respect to $q$.

The bilinear form induced by $q$ is perfect since $p \nmid n+1$. A direct computation shows that
\[
q(\varphi^*{\alpha})= p^2 \sigma q(\alpha).
\]
The canonical decomposition \eqref{eq:FcrystalDecom}
implies the Hodge number $h^0(\rH^2_{\crys}(K_n(A)/W)) =1$. Therefore, $(\rH^2_{\crys}(K_n(A)/W), q)$ is a K3-crystal.
\end{proof}

By the decomposition \eqref{eq:FcrystalDecom}, we have
\[
\rH^2_{\crys}(K_n(A)/W)^{\varphi =p} \cong \rH^2_{\crys}(A/W)^{\varphi =p} \oplus \mathbb{Z}_p \delta.
\]
Thus there is an isomorphism between Tate modules (as $\ZZ_p$-lattices) 
\begin{equation}\label{eq:decomposeNS}
\NS(K_n(A)) \otimes \mathbb{Z}_p \cong \left(\NS(A) \otimes \mathbb{Z}_p \right) \oplus \langle -2(n+1) \rangle.
\end{equation}

\begin{lemma}\label{lemma:abelian-Kummer}
Under the same assumption in \ref{lemma:BBforms}, if $A$ is supersingular, then we have an isomorphism of characteristic subspaces
\[
\mathbb{K}_{\rH^2_{\crys}(A/W)} \xrightarrow{\sim} \mathbb{K}_{\rH^2_{\crys}(K_n(A)/W)}.
\]
\end{lemma}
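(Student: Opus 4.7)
The plan is to deduce the identification of characteristic subspaces directly from the orthogonal $F$-crystal decomposition of Lemma \ref{lemma:BBforms}(ii), by showing that the rank-one summand $W(-1)$ can be absorbed entirely into the self-dual block $T_1$ of the decomposition \eqref{eq:decompositonTatemodule}. Since the characteristic subspace is extracted from the complementary block $T_0$, it will then be automatically unchanged.

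Write $H_1 = \rH^2_{\crys}(A/W)$ and $H_2 = \rH^2_{\crys}(K_n(A)/W)$, and let $\delta$ generate the $W(-1)$-summand so that $q(\delta) = -2(n+1)$. First, taking Frobenius-fixed parts of the decomposition $H_2 \cong H_1 \oplus W(-1)\delta$ yields the orthogonal decomposition of $\mathbb{Z}_p$-lattices $T_{H_2} = T_{H_1} \oplus \mathbb{Z}_p\delta$; since $p \neq 2$ and $p \nmid n+1$, the sublattice $(\mathbb{Z}_p\delta, q|_{\mathbb{Z}_p\delta})$ is unimodular. Next, I would argue that one may choose the decomposition \eqref{eq:decompositonTatemodule} compatibly, with $T_0^{(2)} = T_0^{(1)}$ and $T_1^{(2)} = T_1^{(1)} \oplus \mathbb{Z}_p\delta$: the block $T_0$ is characterized up to isometry by the $p$-elementary discriminant group $T_{H_i}^{\vee}/T_{H_i}$, and adjoining a unimodular orthogonal summand alters neither this group nor the Artin invariant. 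In particular, $A$ and $K_n(A)$ share the same Artin invariant.

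Finally, I would compute $\overline{H_i} = \ker(T_{H_i}\otimes k \to H_i\otimes k)$. Because the decomposition is orthogonal and Frobenius-compatible, this kernel splits as a direct sum; on the new summand, the reduction $\mathbb{Z}_p\delta \otimes k \to W(-1)\otimes k$ is an isomorphism (it sends $\bar\delta$ to a generator of the rank-one $k$-vector space), so it contributes no kernel. Hence $\overline{H_2} = \overline{H_1}$ inside $T_0^{(1)}\otimes k = T_0^{(2)}\otimes k$, and applying $\varphi^{-1}$ produces the desired equality $\mathbb{K}_{H_1} = \mathbb{K}_{H_2}$ of characteristic subspaces.

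The main obstacle is the alignment step: matching the Frobenius-theoretic splitting supplied by Lemma \ref{lemma:BBforms}(ii) with the non-canonical lattice-theoretic splitting in \eqref{eq:decompositonTatemodule}. The key input is that a unimodular orthogonal summand of a $\mathbb{Z}_p$-lattice can always be absorbed into the self-dual block $T_1$, which is precisely where the hypothesis $p \nmid 2(n+1)$ enters decisively. Once this compatibility is established, the remainder is a routine mod-$p$ diagram chase.
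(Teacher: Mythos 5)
Your argument is correct and follows essentially the same route as the paper: both exploit the orthogonal splitting $\rH^2_{\crys}(K_n(A)/W)\cong \rH^2_{\crys}(A/W)\oplus W(-1)$ of Lemma \ref{lemma:BBforms} together with the fact that the resulting rank-one summand $\langle -2(n+1)\rangle$ of the Tate module is unimodular (as $p\nmid 2(n+1)$), hence can be absorbed into the self-dual block $T_1$ and contributes nothing to the characteristic subspace. The paper merely phrases the Tate modules as $\NS\otimes\mathbb{Z}_p$ via $c_1$ and leaves the absorption and kernel computation as ``not hard to see''; your write-up makes those steps explicit.
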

\begin{proof}
Consider the following commutative diagram
\[
\begin{tikzcd}
\NS(A) \otimes \mathbb{Z}_p \ar[r,"c_1"] \ar[d,hook] & \rH^2_{\crys}(A/W) \ar[d,hook] \\
\NS(K_n(A)) \otimes \mathbb{Z}_p\ar[r,"c_1"] & \rH^2_{\crys}(K_n(A)/W)
\end{tikzcd}
\]
The horizontal injective morphisms are isometric embeddings from \eqref{eq:decomposeNS}. Thus it is not hard to see that they induces isomorphic characteristic subspaces.
\end{proof}
\begin{rmk}
The Lemma \ref{lemma:abelian-Kummer} can be viewed as a higher dimensional analogue to the relationship between supersingular abelian surfaces and their associated supersingular Kummer surfaces, which is a miracle used in Ogus's proof of crystalline Torelli theorem for supersingular Kummer surfaces.
\end{rmk}

\begin{lemma}\label{lemma:isocrystalH2}
If $K_n(A)$ and $K_n(A')$ are quasi-liftably birational equivalent, then there is an isomorphism of $F$-crystals $\rH^2_{\crys}(K_n(A)/W) \xrightarrow{\sim} \rH^2_{\crys}(K_n(A')/W)$ preserving Beauville-Bogomolov forms.
\end{lemma}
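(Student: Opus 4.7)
The plan is to lift the birational map to characteristic zero using the quasi-liftable hypothesis, apply a classical Hodge-theoretic result there, and then descend back to characteristic $p$ via a comparison theorem analogous to the one used in Lemma \ref{lemma:BBforms}.

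First, I will unwind the definition of quasi-liftable birational equivalence from \cite[Definition 3.3]{FL18}: it provides (after possibly passing to a finite totally ramified extension $V$ of $W$) liftings $\mathcal{K}_{n}(\mathcal{A})$ and $\mathcal{K}_{n}(\mathcal{A}')$ of $K_{n}(A)$ and $K_{n}(A')$ over $V$, together with a birational map $\mathcal{F}\colon \mathcal{K}_{n}(\mathcal{A}) \dashrightarrow \mathcal{K}_{n}(\mathcal{A}')$ whose special fiber recovers the given birational equivalence. By resolving the graph of $\mathcal{F}$, I obtain a smooth projective algebraic correspondence $\mathcal{Z}$ defined over $V$, whose cycle class acts on the relative second cohomologies.

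Next, on the geometric generic fiber, both spaces are irreducible holomorphic symplectic varieties of generalized Kummer type, and a classical theorem of Huybrechts tells us that a birational map between irreducible symplectic manifolds is an isomorphism in codimension one and induces a Hodge isometry on $\rH^{2}$ preserving the Beauville--Bogomolov form. The correspondence $[\mathcal{Z}]$ realizes this isometry on the $p$-adic étale cohomology of the geometric generic fiber. I will then transfer the isometry to the crystalline cohomology of the special fibers through the integral crystalline-étale comparison theorem already invoked in the proof of Lemma \ref{lemma:BBforms}. Because $\mathcal{Z}$ is defined globally over $V$, its crystalline realization on the special fiber is compatible under comparison with its étale realization on the generic fiber, so it descends to an isomorphism of $F$-crystals $\rH^{2}_{\crys}(K_{n}(A)/W) \xrightarrow{\sim} \rH^{2}_{\crys}(K_{n}(A')/W)$.

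The main obstacle will be the bookkeeping around the correspondence: checking that the cycle class of $\mathcal{Z}$ yields a single morphism of $F$-crystals on the special fibers that agrees, under the comparison isomorphism, with Huybrechts' isometry on the geometric generic fiber, and that the Beauville--Bogomolov forms constructed in Lemma \ref{lemma:BBforms} on the two special fibers are genuinely the specializations of the generic Beauville--Bogomolov forms. The compatibility under specialization should follow from the fact that the exceptional class $\delta$ of the summation map $\cA^{[n+1]} \to \cA$ is preserved by the quasi-liftable structure and has self-intersection $-2(n+1)$, which is invertible in $W$ by the assumption $p \nmid n+1$; this invertibility also guarantees that the decomposition \eqref{eq:FcrystalDecom} is an orthogonal decomposition of $F$-crystals and not merely of isocrystals, so that the isometry descended from characteristic zero indeed preserves the integral form.
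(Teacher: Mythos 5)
Your proposal follows essentially the same route as the paper's proof: use the quasi-liftable hypothesis to lift both varieties over a finite extension $V$ of $W$, invoke Huybrechts' correspondence to obtain a Galois-equivariant isometry of the $p$-adic \'etale cohomology of the geometric generic fibers compatible with the Beauville--Bogomolov forms, and descend to the special fibers through the integral crystalline--\'etale comparison already built into the construction of Lemma \ref{lemma:BBforms}. The only cosmetic difference is that the paper keeps the correspondence on the generic fiber (over a totally ramified extension $F$ of $\Frac(V)$) and lets the comparison isomorphism do the transfer, rather than spreading the cycle out over $V$ and matching its two realizations.
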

\begin{proof}
We may assume that $K_n(A)$ and $K_n(A')$ are liftable as $\cK$ and $\cK'$ over some finite extension $V$ of $W$, whose geometric generic fibers are birational equivalent irreducible symplectic varieties. Then there is a correspondence $Z_{F} \subset (\cK_F \times \cK_F')$ for some totally ramified finite field extension $F$ of $K=\Frac(V)$, such that $[Z_F]_{*} \colon \rH^2_{\et}(\cK_{\bar{K}},\ZZ_p) \xrightarrow{\sim} \rH^2_{\et}(\cK'_{\bar{K}},\ZZ_p)$ is a $G_F$-equivariant isomorphism, compatible with Beauville-Bogomolov forms (\cf\cite[Lemma 2.6]{Huybrechts99}). The construction in Lemma \ref{lemma:BBforms} implies that there is an isomorphism of $F$-crystals as we required.
\end{proof}

If $K_v(A)$ and $K_{v'}(A')$ are quasi-liftably birational equivalent, then $K_n(A)$ and $K_n(A')$ are also quasi-liftably birational equivalent by combining \eqref{eq:birationalKumtype}.
This implies that there exists isomorphisms between characteristic subspaces
\[
\bK_{A} \xrightarrow[\text{Lemma \ref{lemma:abelian-Kummer}}]{\sim} \bK_{K_n(A)} \xrightarrow[\text{Lemma \ref{lemma:isocrystalH2}}]{\sim} \bK_{K_n(A')} \xrightarrow[\text{Lemma \ref{lemma:abelian-Kummer}}]{\sim} \bK_{A'}.
\]
Therefore, by Theorem \ref{thm:characteristicspace} there is an isomorphism of K3-crystals
\[
\rH^2_{\crys}(A/W) \cong \rH^2_{\crys}(A'/W).
\]
Now we can conclude that $A \cong A'$ by Ogus's crystalline Torelli theorem for supersingular abelian surfaces.

\bibliographystyle{amsplain}
\bibliography{refs}

\providecommand{\bysame}{\leavevmode\hbox to3em{\hrulefill}\thinspace}
\providecommand{\MR}{\relax\ifhmode\unskip\space\fi MR }
\providecommand{\MRhref}[2]{%
  \href{http://www.ams.org/mathscinet-getitem?mr=#1}{#2}
}
\providecommand{\href}[2]{#2}
\begin{thebibliography}{10}

\bibitem{BBM2006}
Pierre Berthelot, Lawrence Breen, and William Messing, \emph{Th\'{e}orie de
  {D}ieudonn\'{e} cristalline. {II}}, Lecture Notes in Mathematics, vol. 930,
  Springer-Verlag, Berlin, 1982. \MR{667344}

\bibitem{BMS1}
Bhargav Bhatt, Matthew Morrow, and Peter Scholze, \emph{Integral {$p$}-adic
  {H}odge theory}, Publ. Math. Inst. Hautes \'{E}tudes Sci. \textbf{128}
  (2018), 219--397. \MR{3905467}

\bibitem{BKR}
Tom Bridgeland, Alastair King, and Miles Reid, \emph{The {M}c{K}ay
  correspondence as an equivalence of derived categories}, J. Amer. Math. Soc.
  \textbf{14} (2001), no.~3, 535--554. \MR{1824990}

\bibitem{dolgachevnote}
Igor~V. Dolgachev, \emph{Derived categories},
  \url{www.math.lsa.umich.edu/~idolga/derived9.pdf}.

\bibitem{Elagin2014}
Alexey Elagin, \emph{On equivariant triangulated categories}, arXiv:1403.7027
  (2014).

\bibitem{FL18}
Lie Fu and Zhiyuan Li, \emph{Supersingular irreducible symplectic varieties},
  {R}ationality of {A}lgebraic {V}arieties, Progress in Mathematics, vol. 342,
  2021, pp.~191--244.

\bibitem{HosonoLianEtAl2003}
Shinobu Hosono, Bong~H. Lian, Keiji Oguiso, and Shing-Tung Yau, \emph{Kummer
  structures on {$K3$} surface: an old question of {T}. {S}hioda}, Duke Math.
  J. \textbf{120} (2003), no.~3, 635--647. \MR{2030099}

\bibitem{Huybrechts99}
Daniel Huybrechts, \emph{Compact hyper-{K}\"{a}hler manifolds: basic results},
  Invent. Math. \textbf{135} (1999), no.~1, 63--113. \MR{1664696}

\bibitem{KrugSosna2015}
Andreas Krug and Pawel Sosna, \emph{Equivalences of equivariant derived
  categories}, J. Lond. Math. Soc. (2) \textbf{92} (2015), no.~1, 19--40.
  \MR{3384503}

\bibitem{LOo98}
Ke-Zheng Li and Frans Oort, \emph{Moduli of supersingular abelian varieties},
  Lecture Notes in Mathematics, vol. 1680, Springer-Verlag, Berlin, 1998.
  \MR{1611305}

\bibitem{LieblichMaulik}
Max Lieblich and Davesh Maulik, \emph{A note on the cone conjecture for {K}3
  surfaces in positive characteristic}, Math. Res. Lett. \textbf{25} (2018),
  no.~6, 1879--1891. \MR{3934849}

\bibitem{LO15}
Max Lieblich and Martin Olsson, \emph{Fourier-{M}ukai partners of {K}3 surfaces
  in positive characteristic}, Ann. Sci. \'{E}c. Norm. Sup\'{e}r. (4)
  \textbf{48} (2015), no.~5, 1001--1033. \MR{3429474}

\bibitem{MatsusakaMumford}
T.~Matsusaka and D.~Mumford, \emph{Two fundamental theorems on deformations of
  polarized varieties}, Amer. J. Math. \textbf{86} (1964), 668--684.
  \MR{171778}

\bibitem{Og79}
Arthur Ogus, \emph{Supersingular {$K3$} crystals}, Journ\'{e}es de
  {G}\'{e}om\'{e}trie {A}lg\'{e}brique de {R}ennes ({R}ennes, 1978), {V}ol.
  {II}, Ast\'{e}risque, vol.~64, Soc. Math. France, Paris, 1979, pp.~3--86.
  \MR{563467}

\bibitem{Olsson2016}
Martin Olsson, \emph{Algebraic spaces and stacks}, American Mathematical
  Society Colloquium Publications, vol.~62, American Mathematical Society,
  Providence, RI, 2016. \MR{3495343}

\bibitem{oort74}
Frans Oort, \emph{Subvarieties of moduli spaces}, Invent. Math. \textbf{24}
  (1974), 95--119. \MR{424813}

\bibitem{oort85}
\bysame, \emph{Lifting algebraic curves, abelian varieties, and their
  endomorphisms to characteristic zero}, Algebraic geometry, {B}owdoin, 1985
  ({B}runswick, {M}aine, 1985), Proc. Sympos. Pure Math., vol.~46, Amer. Math.
  Soc., Providence, RI, 1987, pp.~165--195. \MR{927980}

\bibitem{orlov97}
D.~O. Orlov, \emph{Equivalences of derived categories and {$K3$} surfaces}, J.
  Math. Sci. (New York) \textbf{84} (1997), no.~5, 1361--1381, Algebraic
  geometry, 7. \MR{1465519}

\bibitem{orlov02}
\bysame, \emph{Derived categories of coherent sheaves on abelian varieties and
  equivalences between them}, Izv. Ross. Akad. Nauk Ser. Mat. \textbf{66}
  (2002), no.~3, 131--158. \MR{1921811}

\bibitem{ploog07}
David Ploog, \emph{Equivariant autoequivalences for finite group actions}, Adv.
  Math. \textbf{216} (2007), no.~1, 62--74. \MR{2353249}

\bibitem{Polishchuk2003}
Alexander Polishchuk, \emph{Abelian varieties, theta functions and the
  {F}ourier transform}, Cambridge Tracts in Mathematics, vol. 153, Cambridge
  University Press, Cambridge, 2003. \MR{1987784}

\bibitem{romagny05}
Matthieu Romagny, \emph{Group actions on stacks and applications}, Michigan
  Math. J. \textbf{53} (2005), no.~1, 209--236. \MR{2125542}

\bibitem{Shioda1977}
Tetsuji Shioda, \emph{Some remarks on {A}belian varieties}, J. Fac. Sci. Univ.
  Tokyo Sect. IA Math. \textbf{24} (1977), no.~1, 11--21. \MR{450289}

\bibitem{shioda78}
\bysame, \emph{Supersingular {$K3$} surfaces}, Algebraic geometry ({P}roc.
  {S}ummer {M}eeting, {U}niv. {C}openhagen, {C}openhagen, 1978), Lecture Notes
  in Math., vol. 732, Springer, Berlin, 1979, pp.~564--591. \MR{555718}

\bibitem{stacks-project}
The {Stacks project authors}, \emph{The stacks project},
  \url{https://stacks.math.columbia.edu}.

\bibitem{Stellari2007}
Paolo Stellari, \emph{Derived categories and {K}ummer varieties}, Math. Z.
  \textbf{256} (2007), no.~2, 425--441. \MR{2289881}

\bibitem{vistolinote}
Angelo Vistoli, \emph{Grothendieck topologies, fibered categories and descent
  theory}, Fundamental algebraic geometry, Math. Surveys Monogr., vol. 123,
  Amer. Math. Soc., Providence, RI, 2005, pp.~1--104. \MR{2223406}

\end{thebibliography}

\end{document}